\def\eqref#1{equation~\ref{#1}}
\def\1{\bm{1}}
\DeclareMathAlphabet{\mathsfit}{\encodingdefault}{\sfdefault}{m}{sl}
\SetMathAlphabet{\mathsfit}{bold}{\encodingdefault}{\sfdefault}{bx}{n}
\newcommand{\norm}[1]{\left\lVert#1\right\rVert}
\newtheorem{theorem}{Theorem}
\newtheorem{lemma}{Lemma}
\newtheorem{remark}{Remark}
\newtheorem{assumption}{Assumption}
\title{Analysis of Q-learning with Adaptation and Momentum Restart \\ for Gradient Descent}
\author{
Bowen Weng\thanks{Equal contribution}$^1$
\and
Huaqing Xiong$^{*1}$\and
Yingbin Liang$^1$\And
Wei Zhang\thanks{Corresponding author}$^2$\thanks{This paper is an extended work from a preliminary version presented at the 2020 International Joint Conferences on Artificial Intelligence}
\affiliations
$^1$Electrical and Computer Engineering, The Ohio State University, Columbus, OH, USA.\\
$^2$Mechanical and Energy Engineering, Southern University of Science and Technology, China.
\emails
\{weng.172, xiong.309, liang.889\}@osu.edu,
zhangw3@sustech.edu.cn
}
\begin{document}

\maketitle

\begin{abstract}
Existing convergence analyses of Q-learning mostly focus on the vanilla stochastic gradient descent (SGD) type of updates. Despite the Adaptive Moment Estimation (Adam) has been commonly used for practical Q-learning algorithms, there has not been any convergence guarantee provided for Q-learning with such type of updates. In this paper, we first characterize the convergence rate for Q-AMSGrad, which is the Q-learning algorithm with AMSGrad update (a commonly adopted alternative of Adam for theoretical analysis). To further improve the performance, we propose to incorporate the momentum restart scheme to Q-AMSGrad, resulting in the so-called Q-AMSGradR algorithm. The convergence rate of Q-AMSGradR is also established. Our experiments on a linear quadratic regulator problem show that the two proposed Q-learning algorithms outperform the vanilla Q-learning with SGD updates. The two algorithms also exhibit significantly better performance than the DQN learning method over a batch of Atari 2600 games.
\end{abstract}

\section{Introduction} \label{sec:intro}
Q-learning~\cite{watkins1992q}, as one of the most important model-free reinforcement learning (RL) algorithms, has received considerable attention in recent years~\cite{bertsekas1996neuro,even2003learning,tyler2018nd}. The vanilla Q-learning algorithm runs a step of empirical Bellman operator update of the Q-function and a step of stochastic gradient descent (SGD) in an \textit{\textbf{alternating}} fashion. The convergence guarantee of Q-learning has been studied for the tabular case in \cite{bertsekas1995dynamic,even2003learning}, for the case with linear function approximation in~\cite{bertsekas1996neuro,zou2019finite,Chen2019finiteQ}, and also for neural network parameterization in~\cite{xu2019deepQ}.

However, all the existing theoretical analyses focus on Q-learning algorithms that take simple SGD iterations. Such theory is not applicable to practical Q-learning algorithms that implement the Adaptive Moment Estimation (Adam) type of updates. In this paper, we study the Q-learning algorithm with Adam-type updates in terms of its theoretical convergence and the performance in benchmark experiments. It is known in optimization that Adam does not always converge, and instead, and a slightly modified variant AMSGrad proposed in~\cite{reddi2019convergence} has been widely accepted as an alternative to justify the theoretical performance of Adam-type methods. This motivates the first question that we aim to address.
\begin{list}{$\bullet$}{\topsep=0.ex \leftmargin=0.27in \rightmargin=0.in \itemsep =0.02in}
\item {\em Q1: Can we provide the convergence guarantee for Q-learning under AMSGrad updates (i.e., Q-AMSGrad)?}
\end{list}

In conventional optimization problems, {\em restart} has been incorporated into the gradient descent algorithm with momentum as a simple yet effective scheme to facilitate the acceleration performance \cite{o2015adaptive}. 
Hence, it is natural to incorporate such a restart technique into Q-learning and ask the following question about its performance.
\begin{list}{$\bullet$}{\topsep=0.ex \leftmargin=0.27in \rightmargin=0.in \itemsep =0.02in}
\item {\em Q2: Does Q-AMSGrad with momentum restart (i.e., Q-AMSGradR) still converge?}
\end{list}

As aforementioned, both Q-AMSGrad and Q-AMSGradR update alternatingly between one step of Bellman operator update of the Q-function and one step of adaptive momentum update. This is in contrast to the well-known deep Q-Network (DQN) learning~\cite{mnih2015human}, which runs in a nested-loop manner with the outer loop consisting of an one-step update of the Q-function and the inner loop consisting of {\em many} iterations of supervised learning to fit a {\em target Q-function}. It is conventionally known that taking {\em just one} gradient step toward the target Q-function results in the instability of original Q-learning update. Thus the simple alternating update manner mainly aroused interest in theory instead of practice. We are interested in whether the adaptive momentum updates can improve the stability of Q-learning without using the supervised learning process for target network fitting.
Therefore, the third question we want to address is to compare these two types of Q-learning algorithms from the experimental perspective.
\begin{list}{$\bullet$}{\topsep=0.ex \leftmargin=0.27in \rightmargin=0.in \itemsep =0.02in}
\item {\em Q3: Do Q-AMSGrad and Q-AMSGradR perform competitively or even better than DQN in experiment?}
\end{list}

This paper addresses the above theoretical and experimental questions with affirmative answers.

\subsection{Main Contributions}

{\bf Theoretically}, we show that with linear function approximation (which is almost the only structure that the current tools for analysis of Q-learning can handle), both Q-AMSGrad and Q-AMSGradR converge to the global optimal solution under standard assumptions for Q-learning. To the best of our knowledge, this is the first non-asymptotic convergence guarantee on Q-learning that incorporates Adam-type update. Furthermore, a slight adaptation of our proof provides the convergence rate for the AMSGrad for conventional strongly convex optimization which has not been studied before and can be of independent interest.

{\bf Experimentally,} we demonstrate that the practical versions of Q-AMSGrad and Q-AMSGradR (referred to as Q-Adam and Q-AdamR) exhibit appealing experimental performance. 
In a batch of 23 Atari 2600 games, our experiments show that both Q-Adam and Q-AdamR outperform DQN by $50\%$ on average. Furthermore, Q-AdamR effectively reduces the performance variance and achieves a much more stable learning process. In our experiments for the linear quadratic regulator (LQR) problems, Q-AdamR converges even faster than the model-based value iteration (VI) solution. This is a rather surprising result given that the model-based VI has been treated as the performance upper bound for the Q-learning (including DQN) algorithms with target update~\cite{lewis2009reinforcement,yang2019theoretical}.

Detailed proofs and more experimental results will be available in the extended version of this paper on arXiv.org after the official publication of IJCAI Proceedings.

\subsection{Related Work}\label{sec:related_work}
We briefly review the related work as follows.

\vspace{1mm}
\paragraph{Theoretical analysis of Q-learning.} Since proposed in~\cite{watkins1992q}, the convergence of Q-learning has been extensively studied, particularly for the case with linear function approximation such as~\cite{bertsekas1996neuro,zou2019finite,Chen2019finiteQ,du2019provably}, to name a few, and more recently for the case with neural networks in~\cite{xu2019deepQ}, where the analysis exploits the approximate linear structure of neural networks in the overparamterized regime. All these existing analysis of Q-learning considers the vanilla SGD update, whereas our study is the first to analyze the more involved case with Adam-type updates.

\vspace{1mm}
\paragraph{Convergence analysis of Adam-type algorithms in conventional optimization.} Adam was proposed in~\cite{kingma2014adam} for speeding up the training of deep neural networks, and the regret bounds were characterized for Adam/AMSGrad in \cite{kingma2014adam,reddi2019convergence,tran2019convergence} for online convex optimization.  
Recently, convergence analysis of Adam/AMSGrad was provided for nonconvex optimization  in~\cite{zou2018sufficient,zhou2018convergence,chen2018convergence} and policy gradient~\cite{xiong2020amsgradRL}, in which such Adam-type algorithms were guaranteed to converge to a stationary point. To the best of our knowledge, our study provides the first convergence analysis of the Adam-type algorithms for Q-learning.

\vspace{1mm}
\paragraph{Empirical performance of Q-learning.} DQN learning and its improved variants of dueling network structure~\cite{wang2015dueling}, double Q-learning~\cite{van2016deep} and variance exploration and sampling schemes~\cite{schaul2015prioritized} have achieved significant success due to their superb performance  in practice. In contrast to such nested-loop algorithms (which involves the fitting of a target Q-function periodically), the Q-learning algorithms that strictly follow the alternating updates are much less explored in practice.~\cite{mnih2016asynchronous} proposed the asynchronous alternating Q-learning with competitive performance against DQN. However, the algorithm still relies on a slowly moving target network similar to DQN, and the multi-thread learning also complicates the computational setup.~\cite{tyler2018nd} studied the problem of value overestimation and proposed the non-delusional Q-learning algorithm that employs the so-called pre-conditioned Q-networks, which is also computationally complex.~\cite{knight2018natural} proposed a natural gradient propagation to improve the performance, where the gradient implementation is complex. Our experiments in this paper demonstrate that simple alternating Q-learning algorithms Q-AMSGrad and Q-AMSGradR without the complex designs as in \cite{mnih2016asynchronous,tyler2018nd,knight2018natural} have competitive and sometimes better performance than DQN. 

\noindent\textbf{Notations} We use $\|x\|:=\|x\|_{2}$ to denote the $\ell_2$ norm of a vector $x$, and use $\|x\|_{\infty}$ to denote the infinity norm. When $x,y$ are both vectors, $x/y, xy, x^2, \sqrt{x}$ are all calculated in the element-wise manner, which will be used in the update of Adam and AMSGrad. We denote $[n]=1,2,\dots,n$, and $\lfloor x\rfloor\in \mathbb{Z} $ as the integer such that $\lfloor x\rfloor \leq x < \lfloor x\rfloor+1$.


\section{Preliminaries} \label{sec:preliminary}
We consider a Markov decision process with a considerably large or continuous state space $\mathcal{S}\subset \mathbb{R}^{M}$ and action space $\mathcal{A} \subset \mathbb{R}^N$, a non-negative bounded reward function $R: \mathcal{S} \times \mathcal{A} \rightarrow [0, R_{\text{max}}]$, and a transition kernel $P(s'|s,a)$ that indicates the probability from a state-action pair $(s,a)$ to a state $s'$. We define $U(s)\subset \mathcal{A}$ as the admissible set of actions at state $s$, and $\pi:\mathcal{S} \rightarrow \mathcal{A}$ as a feasible stationary policy. We seek to solve a discrete-time sequential decision problem as follows:
\begin{align}
    \label{eq:optimal-control-problem}
    \underset{\pi}{\text{maximize}}
    & \ \ J_{\pi}(s_0) = \mathbb{E}_P\left[\sum_{t=0}^{\infty} \gamma^t R(s_t, \pi(s_t))\right], \nonumber\\
    \text{subject to} 
    & \ \ s_{t+1} \sim P(\cdot|s_t, a_t),
\end{align}
where $\gamma\in(0,1)$ is the discount factor. Let $J^{\star}(s):=J_{\pi^{\star}}(s)$ be the optimal value function when applying the optimal policy $\pi^{\star}$. The corresponding optimal Q-function can be defined as 
\begin{equation}
    \label{eq:optQ}
    Q^{\star}(s, a):= R(s, a) + \gamma\mathbb{E}_P J^{\star}(s'),
\end{equation}
where $s'\sim P(\cdot|s, a)$ and we use the same notation hereafter when no confusion arises.
In other words, $Q^{\star}(s, a)$ represents the reward of an agent who starts from state $s$ and takes action $a$ at the first step and then follows the optimal policy $\pi^{\star}$ thereafter.

\subsection{Q-learning Algorithm}
This paper focuses on the Q-learning algorithm that uses a parametric function $\hat{Q}(s, a ; \theta)$ to approximate the Q-function with a parameter $\theta$ having finite and relatively small dimensions. The update rule of Q-learning is given by
\begin{align}
 & T\hat Q(s,a;\theta_t) = R(s, a) + \gamma \underset{a'\in U(s')}{\max} \hat{Q}(s', a';\theta_{t});\label{eq:QtargetUpdate}\\
 & \theta_{t+1}  = \theta_t - \alpha_t \left( \hat{Q}_{t}(s, a; \theta_{t}) - T\hat Q(s,a;\theta_t) \right) \hat g_t,\label{eq:QthetaUpdate} \\
 & \hat g_t:= \hat g(\theta_t;s,a) = \frac{\partial}{\partial \theta_t}\hat{Q}_{t}(s, a; \theta_{t}), \label{eq:gt}
\end{align}
where $\alpha_t$ is the step size at time $t$.
It is clear that Q-learning performs the update by taking one step of temporal target update and one step of parameter learning in an alternating fashion.

\subsection{Linear Function Approximation}
Like most of the related work, we focus on the convergence analysis under the linear function approximation. 
A linear approximation of the Q-function $\hat{Q}(s, a;\theta)$ can be written as
\begin{equation}
    \label{eq:linearApprox}
    \hat{Q}(s, a;\theta) = \phi(s, a)^T\theta, 
\end{equation}
where $\theta\in\mathbb{R}^d$, and $\phi:\mathcal{S}\times \mathcal{A}\rightarrow \mathbb{R}^d$ is a vector function of size $d$, and the elements of $\Phi$ represent the nonlinear kernel (feature) functions.

\begin{remark}\label{rk:nn}
We note that recent work \cite{xu2019deepQ} established the convergence rate of Q-learning with neural network approximation, which exploits the approximate linear structure of the neural network in the overparameterized regime. Thus, our analysis under the linear function approximation can be generalized to the function class of overparameterized neural networks by applying the techniques developed in recent work \cite{xu2019deepQ}.
\end{remark}

\section{Convergence Analysis of Q-AMSGrad} \label{sec:stability}
In this section, we characterize the convergence guarantee for Q-learning under Adam-type updates.

\subsection{Q-AMSGrad Algorithm}
Although Adam has obtained great success as an optimizer in deep learning, it is well known that Adam by nature is non-convergent even for simple convex loss functions~\cite{reddi2019convergence}. Instead, a slightly modified version called AMSGrad~\cite{reddi2019convergence} is widely used to study the convergence property of Adam-type algorithms in conventional optimization. Here, we apply the update rule of AMSGrad to the Q-learning algorithm and refer to such an algorithm as Q-AMSGrad. Algorithm \ref{alg:opql-AMSGrad} describes Q-AMSGrad in detail.

More specifically, the iterations of Q-AMSGrad evolve by updating the exponentially decaying average of historical gradients ($m_t$) and squared historical gradients ($v_t$). The hyper-parameters $\beta_1,\beta_2$ are used to exponentially decrease the rate of the moving averages. The difference between AMSGrad and Adam lies in the fact that AMSGrad makes the sequence $\hat v_{t,i}$ increasing along the time step $t$ for each entry $i\in [d]$, whereas Adam does not guarantee such a property.
\begin{algorithm}
 	\caption{Q-AMSGrad} \label{alg:opql-AMSGrad} 
 	\begin{algorithmic}[1]
 		\STATE 	{\bf Input:}   $\alpha, \lambda, \theta_{1}, \beta_1, \beta_2, m_0 = 0, \hat v_0 = 0$.
		\FOR{ $t=1, 2, \ldots, T $}
		\STATE $\alpha_t = \frac{\alpha}{\sqrt{t}}$, $\beta_{1t} = \beta_1 \lambda^t$
		\STATE Observe data $(s_t, a_t, s_{t+1}) $ from policy $\pi$ and transition probability $P$
		\STATE $b_t = R(s_t,a_t) + \gamma\underset{a'}{\max}\phi^T(s_{t+1},a')\theta_t$
		\STATE $g_t = \left(\phi^T(s_t, a_t)\theta_t - b_t\right)\phi(s_t, a_t)$
		\STATE $m_{t} = (1-\beta_{1t})m_{t-1} + \beta_{1t} g_t$
		\STATE $v_{t} = (1-\beta_2)\hat v_{t-1} + \beta_2 g_t^2$
		\STATE $\hat v_t = \max (\hat{v}_{t-1}, v_t),\ \hat{V}_t = diag(\hat{v}_1,\dots,\hat{v}_d) $
		\STATE $\theta_{t+1} = \Pi_{\mathcal{D}, \hat V_t^{1/4}} ( \theta_t - \alpha_t \hat{V}_t^{-\frac{1}{2}}m_t)$\\ where $\Pi_{\mathcal{D}, \hat V_t^{1/4}}(\theta') = \underset{\theta\in\mathcal{D}}{\min}\norm{\hat V_t^{1/4}\left(\theta' - \theta\right)}$. 
		\ENDFOR
        \STATE 	{ {\bf Output:} $\frac{1}{T}\sum_{t=1}^T \theta_t $}
 	\end{algorithmic}
\end{algorithm} 

\subsection{Convergence Result}
Before stating the main theorem, we first introduce some technical assumptions and lemmas for our analysis.
\begin{assumption}\label{asp:boundPhi}
For any state-action pair $(s,a)\in\mathcal{S}\times\mathcal{A}$, the kernel function $\phi$ is uniformly bounded and we have
\begin{equation}
    \norm{\phi(s,a)} \leq 1,\quad \forall (s,a).
\end{equation}
\end{assumption}
This assumption is mild since we can normalize the kernel function if the kernel function is uniformly bounded. It is widely applied in the literature to simplify the analysis of RL algorithms with linear function approximation \cite{bhandari2018finite,Chen2019finiteQ}.

\begin{assumption}\cite[Lemma 6.7]{Chen2019finiteQ}
    \label{asp:qlearning}
    At each iteration $t$, the noisy gradient is unbiased, i.e. $ g_t = \bar g_t + \xi_t$ with $\mathbb{E}\xi_t = 0$ where $\bar g_t=\mathbb{E}[g_t]$. The equation $\bar g(\theta) = 0$ has a unique solution $\theta^\star$, and there exists a $c>0$, such that for any $\theta\in\mathbb{R}^d$ we have
    \begin{equation}\label{eq:qasp}
        (\theta - \theta^\star)^T\bar g(\theta)\geq c\norm{\theta - \theta^\star}^2.
    \end{equation}
\end{assumption}
Assumption~\ref{asp:qlearning} has been proved as a key technical lemma in~\cite{Chen2019finiteQ} under certain assumptions, which appears to be the weakest among the existing studies for establishing the convergence guarantee for Q-learning with linear function approximation. It is the standard assumption in the related literature to analyze the convergence Q-learning with linear function approximation \cite{zou2019finite,Chen2019finiteQ,xu2019deepQ}.

\begin{assumption}
    \label{asp:boundedDomain}
    The domain $\mathcal{D}\subset \mathbb{R}^d$ of approximation parameters is a ball originating at $\theta = 0$ with bounded diameter containing $\theta^\star$. That is, there exists $D_\infty$, such that $\norm{\theta_m - \theta_n} < D_\infty, \forall \theta_m, \theta_n\in \mathcal{D}$, and $\theta^\star\in\mathcal{D}$.
\end{assumption}
This Assumption can be easily satisfied when we apply a projected algorithm, and is standard in the theoretical analysis of Adam-type algorithms~\cite{chen2018convergence,zhou2018convergence}.

Based on the above assumptions, we can immediately obtain the bounded property of the gradient, which is stated in the following lemma.
\begin{lemma}\label{lem:mvbound}
Under Assumptions \ref{asp:boundPhi} and \ref{asp:boundedDomain}, at each iteration $t$, the gradient estimator $g_t$ in Q-AMSGrad is uniformly bounded. That is,
\begin{equation}
    \norm{g_t}_\infty \leq \norm{g_t} \leq R_{\max} + (1+\gamma) D_{\infty}, \quad \forall t.
\end{equation}
In addition, we denote $G_\infty=R_{\max} + (1+\gamma) D_{\infty}$ and let $\{ m_t, \hat{v}_t\}$ for $t=1,2,\dots$ be sequences generated by Algorithm \ref{alg:Q-AMSGradR}. Then we have 
$$\norm{\bar g_t}\leq G_\infty, \norm{m_t} \leq G_\infty, \norm{\hat{v}_t} \leq G_\infty^2.$$
\end{lemma}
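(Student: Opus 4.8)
The plan is to establish the three norm bounds in sequence, starting from the pointwise bound on $g_t$ and then propagating it through the recursions that define $\bar g_t$, $m_t$, and $\hat v_t$.

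\textbf{Step 1: bounding $\norm{g_t}$.} From line 6 of Algorithm~\ref{alg:opql-AMSGrad}, $g_t = (\phi^T(s_t,a_t)\theta_t - b_t)\phi(s_t,a_t)$ with $b_t = R(s_t,a_t) + \gamma\max_{a'}\phi^T(s_{t+1},a')\theta_t$. By Cauchy--Schwarz and Assumption~\ref{asp:boundPhi}, $|\phi^T(s_t,a_t)\theta_t| \le \norm{\phi(s_t,a_t)}\norm{\theta_t} \le \norm{\theta_t}$, and similarly $|\phi^T(s_{t+1},a')\theta_t| \le \norm{\theta_t}$ for every admissible $a'$, so $|b_t| \le R_{\max} + \gamma\norm{\theta_t}$. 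Since $\theta_t\in\mathcal D$ (the iterates are projected onto $\mathcal D$) and $0\in\mathcal D$, Assumption~\ref{asp:boundedDomain} gives $\norm{\theta_t} = \norm{\theta_t - 0} < D_\infty$. Combining, the scalar factor obeys $|\phi^T(s_t,a_t)\theta_t - b_t| \le \norm{\theta_t} + R_{\max} + \gamma\norm{\theta_t} \le R_{\max} + (1+\gamma)D_\infty$, and multiplying by $\norm{\phi(s_t,a_t)} \le 1$ yields $\norm{g_t} \le R_{\max} + (1+\gamma)D_\infty = G_\infty$. The inequality $\norm{g_t}_\infty \le \norm{g_t}$ is the standard relation between the infinity and Euclidean norms.

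\textbf{Step 2: the remaining three bounds.} For $\bar g_t = \mathbb E[g_t]$, Jensen's inequality gives $\norm{\bar g_t} = \norm{\mathbb E[g_t]} \le \mathbb E\norm{g_t} \le G_\infty$. For $m_t$, I would argue by induction on $t$: $m_0 = 0$ satisfies the bound trivially, and from line 7, $m_t = (1-\beta_{1t})m_{t-1} + \beta_{1t} g_t$ is a convex combination (since $\beta_{1t} = \beta_1\lambda^t \in [0,1]$) of two vectors each of norm at most $G_\infty$, so the triangle inequality gives $\norm{m_t} \le (1-\beta_{1t})G_\infty + \beta_{1t}G_\infty = G_\infty$. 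For $\hat v_t$, note first that each entry of $g_t^2$ is at most $\norm{g_t}_\infty^2 \le G_\infty^2$; then $v_t = (1-\beta_2)\hat v_{t-1} + \beta_2 g_t^2$ is again a convex combination, so by the same inductive argument $\norm{\hat v_t}_\infty \le G_\infty^2$ entrywise, and since $\hat v_t = \max(\hat v_{t-1}, v_t)$ is taken entrywise the bound is preserved by the max. (If the lemma intends $\norm{\hat v_t}$ in the Euclidean sense one absorbs an extra $\sqrt d$ or reinterprets via $\norm{\hat v_t} \le \norm{g_t}^2$-type bounds; the statement as written is most naturally read entrywise, matching how $\hat V_t^{-1/2}$ is used later.)

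I do not expect any genuine obstacle here — this is a routine bookkeeping lemma. The only point requiring a little care is making sure the iterates $\theta_t$ indeed lie in $\mathcal D$ so that Assumption~\ref{asp:boundedDomain} applies to bound $\norm{\theta_t}$; this is immediate from the projection step $\theta_{t+1} = \Pi_{\mathcal D, \hat V_t^{1/4}}(\cdot)$ together with the initialization $\theta_1 \in \mathcal D$. The mild subtlety in the reference to "Algorithm~\ref{alg:Q-AMSGradR}" (versus Algorithm~\ref{alg:opql-AMSGrad}) does not affect the argument, since the recursions for $m_t$ and $\hat v_t$ are identical in both algorithms.
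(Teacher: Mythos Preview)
Your proposal is correct and follows essentially the same approach as the paper: bound the scalar factor in $g_t$ via Cauchy--Schwarz and Assumptions~\ref{asp:boundPhi}, \ref{asp:boundedDomain}, then push the bound through the convex-combination recursions for $m_t$ and $\hat v_t$ by induction. Your use of Jensen's inequality for $\bar g_t$ is a slight shortcut over the paper's repeat-the-direct-argument step, and your explicit remark that the $\hat v_t$ bound is most cleanly read entrywise is actually more careful than the paper, whose final inequality $\norm{\hat v_t}\le \max\{\norm{\hat v_{t-1}},\norm{v_t}\}$ is not literally true for the Euclidean norm of an entrywise maximum.
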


We next provide the non-asymptotic convergence of Q-AMSGrad 
in the following theorem.
\begin{theorem}
    \label{thm:adamStability}\label{eq:adamThm}
    (Convergence of Q-AMSGrad)
    Suppose $\alpha_t=\frac{\alpha}{\sqrt{t}}, \beta_{1t}=\beta_1\lambda^t$ and $\delta=\beta_1/\beta_2$ with $\delta,\lambda\in(0,1)$ for $t=1,2,\dots$ in Algorithm \ref{alg:opql-AMSGrad}. Given Assumptions \ref{asp:boundPhi} $\sim$ \ref{asp:boundedDomain}, the output of Q-AMSGrad satisfies:
    \begin{equation}\label{eq:thm1}
        \begin{aligned}
        \mathbb{E} & \norm{\theta_{out}- \theta^\star} \\ 
        & \leq \frac{B_1}{T} + \frac{B_2}{\sqrt{T}} + \frac{B_3\sqrt{1+\log T}}{T}\sum_{i=1}^d\mathbb{E}\norm{g_{1:T,i}},
        \end{aligned}
    \end{equation}
    where $B_1=\frac{G_\infty D_\infty^2}{2\alpha_2 c (1-\beta_1)} + \frac{\beta_1 G_\infty D_\infty^2 }{2\alpha c(1-\beta_1)(1-\lambda)^2} + \norm{\theta_1-\theta^\star}^2$, $ B_2=\frac{d G_\infty D_\infty^2}{2\alpha c(1-\beta_1)}$, and $B_3=\frac{\alpha  (1+\beta_1)}{2c(1-\beta_1)^2(1-\delta)\sqrt{1-\beta_2}}$.
    
\end{theorem}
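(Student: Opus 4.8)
The plan is to adapt the online-convex regret analysis of AMSGrad from \cite{reddi2019convergence} to the Q-learning setting, using Assumption~\ref{asp:qlearning} (one-point strong monotonicity of the mean update $\bar g$) in place of the convexity that is normally invoked, and then to average the resulting per-iterate distance bound. Throughout I would condition on the filtration $\mathcal{F}_t$ generated by the data seen before step $t$, so that $\theta_t$ is $\mathcal{F}_t$-measurable and $\mathbb{E}[g_t\mid\mathcal{F}_t]=\bar g_t$ with $(\theta_t-\theta^\star)^T\bar g_t\ge c\norm{\theta_t-\theta^\star}^2$; Lemma~\ref{lem:mvbound} supplies the uniform bounds $\norm{g_t},\norm{m_t}\le G_\infty$, $\norm{\hat v_t}\le G_\infty^2$ that keep every residual term finite, and Assumption~\ref{asp:boundedDomain} keeps the iterates within distance $D_\infty$ of $\theta^\star$.

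\textbf{One-step inequality.} Because $\Pi_{\mathcal{D},\hat V_t^{1/4}}$ is non-expansive in the $\hat V_t^{1/4}$-weighted norm and $\theta^\star\in\mathcal{D}$, squaring the projection step of Algorithm~\ref{alg:opql-AMSGrad} gives
\[
\norm{\hat V_t^{1/4}(\theta_{t+1}-\theta^\star)}^2\le\norm{\hat V_t^{1/4}(\theta_t-\theta^\star)}^2-2\alpha_t\langle m_t,\theta_t-\theta^\star\rangle+\alpha_t^2\norm{\hat V_t^{-1/4}m_t}^2 .
\]
Using the momentum recursion of Algorithm~\ref{alg:opql-AMSGrad}, $\langle m_t,\theta_t-\theta^\star\rangle$ splits into a term carrying $\langle g_t,\theta_t-\theta^\star\rangle$, with coefficient bounded below by $1-\beta_1$, plus a residual momentum term $\propto\beta_{1t}\langle m_{t-1},\theta_t-\theta^\star\rangle$ that I would dominate by Young's inequality at the cost of an extra $\tfrac{\alpha_t}{2}\norm{\hat V_{t-1}^{-1/4}m_{t-1}}^2$ and a $\tfrac{\beta_{1t}}{2\alpha_t}\norm{\hat V_{t-1}^{1/4}(\theta_t-\theta^\star)}^2$, the last of which is controlled by $\norm{\theta_t-\theta^\star}\le D_\infty$ and $\norm{\hat v_{t-1}}\le G_\infty^2$.

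\textbf{Summation.} Summing over $t=1,\dots,T$ (after dividing by $2\alpha_t$) and taking total expectations, three sums remain. (i) The distance terms $\norm{\hat V_t^{1/4}(\theta_t-\theta^\star)}^2$ fail to telescope because the preconditioner changes, but the AMSGrad property that each $\hat v_{t,i}$ is nondecreasing in $t$ yields $\norm{\hat V_t^{1/4}(\theta_t-\theta^\star)}^2-\norm{\hat V_{t-1}^{1/4}(\theta_t-\theta^\star)}^2=\sum_{i=1}^d(\hat v_{t,i}^{1/2}-\hat v_{t-1,i}^{1/2})(\theta_{t,i}-\theta_i^\star)^2\le D_\infty^2\sum_{i=1}^d(\hat v_{t,i}^{1/2}-\hat v_{t-1,i}^{1/2})$, which telescopes to at most $dD_\infty^2G_\infty$ by Lemma~\ref{lem:mvbound}; carried with the $1/\alpha_t=\sqrt t/\alpha$ weights this produces the $\sqrt T$ growth, i.e.\ the $B_2/\sqrt T$ term. (ii) The adaptive-gradient sums $\sum_t\alpha_t\norm{\hat V_t^{-1/4}m_t}^2$ (and its twin from the Young step) are bounded by the standard AMSGrad lemma of \cite{reddi2019convergence}, which with $\alpha_t=\alpha/\sqrt t$ and $\delta=\beta_1/\beta_2\in(0,1)$ gives, in expectation, a bound of the form $\tfrac{\alpha\sqrt{1+\log T}}{(1-\beta_1)(1-\delta)\sqrt{1-\beta_2}}\sum_{i=1}^d\norm{g_{1:T,i}}$ — exactly the shape of the $B_3$ term. (iii) The residual momentum contribution is a $\beta_{1t}=\beta_1\lambda^t$-weighted series whose relevant moments $\sum_t\beta_{1t}$ and $\sum_t\sqrt t\,\beta_{1t}\le\sum_t t\,\beta_{1t}$ produce the $\tfrac1{(1-\lambda)^2}$ factor, which together with the $t=1$ leftovers and $\norm{\theta_1-\theta^\star}^2$ assembles $B_1$.

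\textbf{Monotonicity, averaging, and the hard part.} On the left, the term proportional to $\sum_t\langle g_t,\theta_t-\theta^\star\rangle$ becomes, after taking expectations and applying Assumption~\ref{asp:qlearning} coordinatewise in time via the tower property, at least $(1-\beta_1)c\sum_{t=1}^T\mathbb{E}\norm{\theta_t-\theta^\star}^2$; dividing by $(1-\beta_1)c$, then using convexity of $\norm{\cdot}^2$, namely $\mathbb{E}\norm{\theta_{out}-\theta^\star}^2=\mathbb{E}\norm{\tfrac1T\sum_{t=1}^T(\theta_t-\theta^\star)}^2\le\tfrac1T\sum_{t=1}^T\mathbb{E}\norm{\theta_t-\theta^\star}^2$, and dividing by $T$, delivers a bound of the stated form on $\mathbb{E}\norm{\theta_{out}-\theta^\star}^2$ with the $B_i$ above. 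Running the identical computation with the stochastic semi-gradient replaced by the exact gradient of a $c$-strongly convex objective recovers the AMSGrad rate in conventional optimization promised in the introduction. The main obstacle is that three difficulties coexist: there is no underlying objective, so no descent lemma is available and the one-point monotonicity can be used only \emph{in conditional expectation}, which forces careful martingale bookkeeping; the momentum iterate $m_t$ is misaligned with $g_t$ and carries a time-varying weight $\beta_{1t}$; and the preconditioner $\hat V_t$ is adaptive and per-coordinate. The resolution is to keep the AMSGrad regret template intact and substitute the strong-monotonicity inequality at the single step where convexity of the per-round loss is ordinarily used, so that the adaptive-method machinery (the $\hat v_t$-monotone telescoping and the $\sum_i\norm{g_{1:T,i}}$ lemma) transfers unchanged.
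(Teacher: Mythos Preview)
Your proposal is correct and follows essentially the same route as the paper's proof: expand the projected update in the $\hat V_t^{1/4}$-weighted norm, split $m_t=\beta_{1t}m_{t-1}+(1-\beta_{1t})g_t$, control the $\beta_{1t}$-residual by Young/Cauchy--Schwarz, replace $\langle g_t,\theta_t-\theta^\star\rangle$ by $\langle\bar g_t,\theta_t-\theta^\star\rangle\ge c\norm{\theta_t-\theta^\star}^2$ via the tower property and Assumption~\ref{asp:qlearning}, telescope the preconditioned distance using the monotonicity of $\hat v_{t,i}$, invoke Lemma~\ref{lem:mVBound} for the $\sum_t\alpha_t\norm{\hat V_t^{-1/4}m_t}^2$ term and Lemma~\ref{lem:seqSum} for the $\sum_t\beta_{1t}/\alpha_t$ term, and finish with Jensen on $\theta_{out}$. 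One small sharpening: in your step (i) the telescope should be applied directly to the combined quantities $\hat v_{t,i}^{1/2}/\alpha_t-\hat v_{t-1,i}^{1/2}/\alpha_{t-1}$ (nonnegative because both $\hat v_{t,i}$ is nondecreasing and $\alpha_t$ is nonincreasing), which is how the $\sqrt{T}$ arises cleanly; telescoping only $\hat v_{t,i}^{1/2}-\hat v_{t-1,i}^{1/2}$ and then ``carrying the $1/\alpha_t$ weights'' separately does not quite work as stated.
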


In Theorem \ref{thm:adamStability}, $B_1, B_2, B_3 $ in the bound in~\Cref{eq:thm1} are constants and independent of time. Therefore, under the choice of the stepsize and hyper-parameters in Algorithm \ref{alg:opql-AMSGrad}, Q-AMSGrad achieves a convergence rate of $\mathcal{O}\left( \frac{1}{\sqrt{T}} \right)$ when $\sum_{i=1}^d\norm{g_{1:T,i}} << \sqrt{T}$ \cite{reddi2019convergence}.
\begin{remark}~\label{rmk: ql and opt}
Our proof of convergence here has two major differences from that for AMSGrad in~\cite{reddi2019convergence} in conventional optimization: (a) The two algorithms are quite different. Q-AMSGrad is a Q-learning algorithm alternatively finding the best policy with a moving target, whereas AMSGrad is an optimizer for conventional optimization and does not have alternating nature. (b) Our analysis is on the convergence rate whereas~\cite{reddi2019convergence} provides regret bound. In fact, a slight modification of our proof also provides the convergence rate of AMSGrad for conventional strongly convex optimization, which can be of independent interest. Moreover, our proof avoids the theoretical error in the proof in~\cite{reddi2019convergence} as pointed out by~\cite{tran2019convergence}.
\end{remark}

\section{Convergence Analysis of Q-AMSGradR}

In this section, we propose to incorporate a momentum restart technique to Q-AMSGrad in order to improve its performance. We first introduce the algorithm and then provide the convergence analysis for such an algorithm. We demonstrate its desired experimental performance in Section \ref{sec:experiments}.

\subsection{Q-AMSGrad Algorithm with Momentum Restart}

We introduce the restart technique to Q-AMSGrad and propose Q-AMSGradR as shown in Algorithm~\ref{alg:Q-AMSGradR}.
Q-AMSGradR applies the same update rule as Algorithm \ref{alg:opql-AMSGrad}, but periodically resets $m_t, \hat{v}_t$ with a period of $r$, i.e., $m_t = 0, \hat v_t = 0, \forall t=kr,k = 1,2,\cdots$. We explain such an idea further as follows.

\begin{algorithm}[t]
 	\caption{Q-AMSGradR} \label{alg:Q-AMSGradR} 
 	\begin{algorithmic}[1]
 		\STATE 	{\bf Input:}   $\alpha, \lambda, \theta_{1}, \beta_1, \beta_2, m_0 = 0, \hat v_0 = 0$.
		\FOR{ $t=1, 2, \ldots, T $}
		\IF { $\textrm{mod}(t,r) = 0$ }
		\STATE $m_t=0$, $\hat v_t=0$
		\ENDIF
		\STATE $\alpha_t = \frac{\alpha}{\sqrt{t}}$, $\beta_{1t} = \beta_1 \lambda^t$
		\STATE Observe data $(s_t, a_t, s_{t+1}) $ from policy $\pi$ and transition probability $P$
		\STATE $b_t = R(s_t,a_t) + \gamma\underset{a'}{\max}\phi^T(s_{t+1},a')\theta_t$
		\STATE $g_t = \left(\phi^T(s_t, a_t)\theta_t - b_t\right)\phi(s_t, a_t)$
		\STATE $m_{t} = (1-\beta_{1t})m_{t-1} + \beta_{1t} g_t$
		\STATE $v_{t} = (1-\beta_2)\hat v_{t-1} + \beta_2 g_t^2$
		\STATE $\hat v_t = \max (\hat{v}_{t-1}, v_t),\ \hat{V}_t = diag(\hat{v}_1,\dots,\hat{v}_d) $
		\STATE $\theta_{t+1} = \Pi_{\mathcal{D}, \hat V_t^{1/4}} \left( \theta_t - \alpha_t \hat{V}_t^{-\frac{1}{2}}m_t\right)$
		\ENDFOR
        \STATE 	{ {\bf Output:} $\frac{1}{T}\sum_{t=1}^T \theta_t $}
 	\end{algorithmic}
\end{algorithm} 

Traditional momentum-based algorithms largely depend on the historical gradient direction. When part of the historical information is incorrect, the estimation error tends to accumulate. The restart technique can be employed to deal with such an issue. One way to restart the momentum-based methods is to initialize the momentum at some restart iteration. That is, at restart iteration $r$, we reset $m_r,v_r$, i.e., $m_{r} = 0, v_r =0$, which yields $\theta_{r+1}=\theta_r$.
It is an intuitive implementation technique to adjust the trajectory from time to time, and can usually help mitigate the aforementioned problem while keeping fast convergence property. For the implementation, we execute the restart periodically with a period $r$. It turns out that the restart technique can significantly improve the numerical performance, which can be seen in Section~\ref{sec:experiments}.

\subsection{Convergence Result}

In the following theorem, we provide the non-asymptotic convergence for Q-AMSGradR.
\begin{theorem}
    \label{thm:adamRestartStability}
    (Convergence of Q-AMSGradR) Suppose $\alpha_t=\frac{\alpha}{\sqrt{t}}, \beta_{1t}=\beta_1\lambda^t$ and $\delta=\beta_1/\beta_2$ with $\delta,\lambda\in(0,1)$ for $t=1,2,\dots$ in Algorithm \ref{alg:Q-AMSGradR}. Given Assumptions \ref{asp:boundPhi} $\sim$ \ref{asp:boundedDomain}, the output of Q-AMSGradR satisfies: 
    \begin{equation}\label{eq:thm2}
    \begin{aligned}
    & \mathbb{E}\norm{\theta_{out}-\theta^\star} \\
    & \leq \frac{B_1}{T} + \frac{B_2\sqrt{1+\log T}}{T}\sum_{i=1}^d\mathbb{E}\norm{g_{1:T,i}} \\
    & + \frac{B_3}{T}\left(\sqrt{T} + \sum_{k=1}^{\lfloor T/r \rfloor }\sqrt{kr-1} \right)\\ 
    & +\frac{1}{T} \sum_{k=0}^{\lfloor T/r \rfloor }\left( \frac{G_\infty D_\infty^2}{\alpha} \sqrt{kr+2}  + B_4 \mathbb{E}\norm{\theta_{kr} - \theta^\star}^2  \right),
    \end{aligned}
    \end{equation}
    where $B_1 =\frac{\beta_1D_\infty^2 G_\infty}{2\alpha c(1-\beta_1)(1-\lambda)^2}$, $B_2 = \frac{\alpha  (1+\beta_1)}{2c(1-\beta_1)^2(1-\delta)\sqrt{1-\beta_2}}$, $B_3=\frac{d G_\infty D_\infty^2}{2\alpha c(1-\beta_1)}$, and $B_4=4c(1-\beta_1)$.
\end{theorem}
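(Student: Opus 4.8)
The plan is to mimic the proof of Theorem~\ref{thm:adamStability}, but to run the whole argument block by block. Since Q-AMSGradR (Algorithm~\ref{alg:Q-AMSGradR}) differs from Q-AMSGrad only in that $m_t$ and $\hat v_t$ are reset to $0$ at every iteration $t=kr$, I would partition $\{1,\dots,T\}$ into the $\lfloor T/r\rfloor+1$ blocks $\mathcal B_k=\{kr+1,\dots,\min((k+1)r,T)\}$, $k=0,\dots,\lfloor T/r\rfloor$. The structural fact that makes this work is that, because $m_{kr}=\hat v_{kr}=0$, for every $t\in\mathcal B_k$ the vectors $m_t$ and $\hat v_t$ depend only on the gradients $g_{kr+1},\dots,g_t$ from the \emph{same} block; hence inside $\mathcal B_k$ the iteration is exactly Q-AMSGrad \emph{initialized} at $\theta_{kr}$ with $m=\hat v=0$ (the schedule $\alpha_t=\alpha/\sqrt t$, $\beta_{1t}=\beta_1\lambda^t$ still using the global index $t$), so the single-iteration estimates from the proof of Theorem~\ref{thm:adamStability} apply verbatim within each block, and the only effect of the restarts is to break the telescoping that previously ran over the entire horizon.

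Concretely, I would start from the one-step inequality already used for Theorem~\ref{thm:adamStability}: non-expansiveness of the weighted projection $\Pi_{\mathcal D,\hat V_t^{1/4}}$ together with $\theta^\star\in\mathcal D$ (Assumption~\ref{asp:boundedDomain}) gives
\[
\norm{\hat V_t^{1/4}(\theta_{t+1}-\theta^\star)}^2\le\norm{\hat V_t^{1/4}(\theta_t-\theta^\star)}^2-2\alpha_t\langle m_t,\theta_t-\theta^\star\rangle+\alpha_t^2\norm{\hat V_t^{-1/4}m_t}^2 .
\]
Expanding $m_t=(1-\beta_{1t})m_{t-1}+\beta_{1t}g_t$, taking conditional expectations so that $\mathbb{E}\langle g_t,\theta_t-\theta^\star\rangle=\langle\bar g_t,\theta_t-\theta^\star\rangle\ge c\norm{\theta_t-\theta^\star}^2$ by Assumption~\ref{asp:qlearning}, and controlling the cross term $\langle m_{t-1},\theta_t-\theta^\star\rangle$ with Young's inequality (Lemma~\ref{lem:mvbound} supplying $\norm{m_{t-1}}\le G_\infty$, $\norm{\hat v_t}\le G_\infty^2$, $\norm{\theta_t-\theta^\star}\le D_\infty$), one reaches a per-step bound whose essential shape is
\[
c(1-\beta_1)\,\mathbb{E}\norm{\theta_t-\theta^\star}^2\le\tfrac{1}{2\alpha_t}\,\mathbb{E}\big[\norm{\hat V_t^{1/4}(\theta_t-\theta^\star)}^2-\norm{\hat V_t^{1/4}(\theta_{t+1}-\theta^\star)}^2\big]+O(\beta_{1t})+\alpha_t\,\mathbb{E}\norm{\hat V_t^{-1/4}m_t}^2 .
\]
Summing over $t\in\mathcal B_k$, the weighted-distance terms telescope only within $\mathcal B_k$; because $\hat v_{kr}=0$, the negative boundary term at the end of $\mathcal B_{k-1}$ is discarded and the positive boundary term at the start of $\mathcal B_k$, namely $\tfrac{1}{\alpha_{kr+1}}\mathbb{E}\norm{\hat V_{kr+1}^{1/4}(\theta_{kr+1}-\theta^\star)}^2\le\tfrac{\sqrt{kr+1}}{\alpha}G_\infty D_\infty^2$, survives, while $\theta_{kr}$ takes over the role played by $\theta_1$ in Theorem~\ref{thm:adamStability}; this is precisely where the terms $\tfrac{G_\infty D_\infty^2}{\alpha}\sqrt{kr+2}$ and $B_4\,\mathbb{E}\norm{\theta_{kr}-\theta^\star}^2$ come from. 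The change-of-preconditioner corrections $\sum_i\big(\hat v_{t,i}^{1/2}/\alpha_t-\hat v_{t-1,i}^{1/2}/\alpha_{t-1}\big)D_\infty^2$ also telescope within $\mathcal B_k$ to a quantity of order $d\,G_\infty D_\infty^2\sqrt{kr}/\alpha$, which sums over $k$ into the $\tfrac{B_3}{T}\big(\sqrt T+\sum_k\sqrt{kr-1}\big)$ piece.

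It then remains to sum over $k=0,\dots,\lfloor T/r\rfloor$ and divide by $cT$. The momentum remainders are dominated by $\sum_{t=1}^T\beta_{1t}=\sum_t\beta_1\lambda^t$ (and a squared analogue producing the $(1-\lambda)^{-2}$ factor): this sum is over the global index and is untouched by the restarts, yielding the $B_1/T$ term with $B_1=\tfrac{\beta_1 D_\infty^2 G_\infty}{2\alpha c(1-\beta_1)(1-\lambda)^2}$, exactly as in Theorem~\ref{thm:adamStability}. The second-order terms $\sum_t\alpha_t\norm{\hat V_t^{-1/4}m_t}^2$ are handled by the standard AMSGrad estimate applied inside each block: since $m_t,\hat v_t$ are block-local and $\hat v_{t,i}$ is nondecreasing within a block, each block contributes, per coordinate $i$, a quantity $\lesssim\tfrac{\alpha}{(1-\delta)\sqrt{1-\beta_2}}\big(\sum_{j\in\mathcal B_k}j^{-1}\big)^{1/2}\norm{g_{\mathcal B_k,i}}$, and summing over $k$ (Cauchy--Schwarz, $\sum_k\norm{g_{\mathcal B_k,i}}^2=\norm{g_{1:T,i}}^2$, and telescoping of the harmonic sums) recovers $\lesssim\tfrac{\alpha\sqrt{1+\log T}}{(1-\delta)\sqrt{1-\beta_2}}\norm{g_{1:T,i}}$, i.e.\ the term $\tfrac{B_2\sqrt{1+\log T}}{T}\sum_i\mathbb{E}\norm{g_{1:T,i}}$ with $B_2$ equal to the constant $B_3$ of Theorem~\ref{thm:adamStability}. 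Assembling the three groups of terms and invoking the triangle/Jensen inequality for the averaged output $\theta_{out}=\tfrac{1}{T}\sum_{t=1}^T\theta_t$ gives the claimed bound on $\mathbb{E}\norm{\theta_{out}-\theta^\star}$.

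The main obstacle is exactly the bookkeeping around the resets. In Theorem~\ref{thm:adamStability} the negative weighted-distance term at step $t$ cancels the positive one at step $t+1$ because both $\hat v_{t,i}\le\hat v_{t+1,i}$ and $1/\alpha_t\le1/\alpha_{t+1}$, so one pays only the monotone increment; at a restart $\hat v_{(k+1)r}$ collapses to $0$, severing this cancellation, so one must instead absorb, for every block, a leftover start term of order $\sqrt{kr}\,G_\infty D_\infty^2/\alpha$ and a quadratic-in-$\norm{\theta_{kr}-\theta^\star}$ remainder. Pinning down the exact constants ($G_\infty D_\infty^2/\alpha$, $B_4=4c(1-\beta_1)$) and the precise $\sqrt{kr+2}$, $\sqrt{kr-1}$, $\sqrt T$ factors, and --- more delicately --- checking that recombining the block-wise $\sum_i\norm{g_{\mathcal B_k,i}}$ bounds back into $\sum_i\norm{g_{1:T,i}}$ costs only the logarithmic factor $\sqrt{1+\log T}$ rather than some polynomial in $T/r$, is where the real care lies; no idea beyond the proof of Theorem~\ref{thm:adamStability} is needed.
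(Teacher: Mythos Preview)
Your proposal is correct and follows essentially the same approach as the paper: partition the horizon into restart blocks, apply the one-step inequality and the telescoping argument from Theorem~\ref{thm:adamStability} within each block (picking up the boundary terms $\tfrac{G_\infty D_\infty^2}{\alpha}\sqrt{kr+2}$ and $B_4\mathbb{E}\norm{\theta_{kr}-\theta^\star}^2$ at each reset), and then recombine the block-wise gradient sums into $\sum_i\norm{g_{1:T,i}}\sqrt{1+\log T}$ via Cauchy--Schwarz using exactly the identities $\sum_k\norm{g_{\mathcal B_k,i}}^2=\norm{g_{1:T,i}}^2$ and $\sum_k\sum_{t\in\mathcal B_k}1/t=\sum_{t=1}^T 1/t$ that you identified. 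The paper carries out this Cauchy--Schwarz step through an intermediate ``add-a-term-then-merge'' inequality (its eq.~\eqref{eq:pf1}), but your direct application is equivalent.
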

Theorem~\ref{thm:adamRestartStability} indicates that for Q-AMSGradR to enjoy a convergence rate of $\mathcal{O}\left( \frac{1}{\sqrt{T}} \right)$, the restart period $r$ needs to be sufficiently large and $\sum_{i=1}^d\norm{g_{1:T,i}} << \sqrt{T}$. In practice as demonstrated by the experiments in Section~\ref{sec:experiments}, Q-AMSGradR typically performs well, not necessarily under the theoretical conditions.

\section{Experimental Performance}\label{sec:experiments}
In this section, we empirically evaluate the Q-learning algorithms studied in this paper. We first study the linear quadratic regulator (LQR) problem, which serves as a direct numerical demonstration of the convergence analysis under linear function approximation. We then use the Atari 2600 games~\cite{brockman2016openai}, a classic benchmark for DQN evaluations, to demonstrate the effectiveness of the Q-learning algorithms for complicated tasks. Note that our experiments test the performance of Q-Adam and Q-AdamR, which serve as practical versions of Q-AMSGrad and Q-AMSGradR by adopting Adam for Q-learning and keeping all major properties of Q-AMSGrad and Q-AMSGradR including the alternating update between Q-function update and parameter fitting. 

Our main focus here lies in: (a) comparison between vanilla Q-Adam and that with momemtum restart (Q-AdamR), (b) comparison between Q-Adam/Q-AdamR and vanilla Q-learning (through LQR), and (c) comparison between Q-Adam/Q-AdamR and DQN (through Atari 2600 games). 
Both of our experiments show that Q-AdamR outperforms Q-Adam, vanilla Q-learning and DQN in terms of convergence speed and variance reduction. Compared with DQN in the empirical experiments of Atari games, under the same hyper-parameter settings, Q-Adam and Q-AdamR improve the performance of DQN by $50\%$ on average.

\subsection{DQN Algorithm}\label{sec:DQN}
As DQN is also included in this work for performance comparison. We recall the update of DQN in the following as reference.
Differently from the vanilla Q-learning, DQN updates the parameters in a nested loop. Within the $t$-th inner loop, DQN first obtains the target Q-function as in~\Cref{eq:DQNtargetUpdate}, and then uses a neural network to fit the target Q-function by running $Y$ steps of a certain optimization algorithm as~\Cref{eq:DQNsuperLearning}. The update rule of DQN is given as follows.
\begin{align}
    & T\hat Q(s,a;\theta_t^0) = R(s, a) + \gamma \underset{a'\in U(s')}{\max} \hat{Q}(s', a';\theta_{t}^0),\label{eq:DQNtargetUpdate}\\
    & \theta_{t}^{Y} = Optimizer(\theta_t^0, T\hat Q(s,a;\theta_t^0)),\label{eq:DQNsuperLearning}
\end{align}
where $Optimizer$ can be SGD or Adam for example, and~\Cref{eq:DQNsuperLearning} is thus a supervised learning process with $T\hat Q(s,a;\theta_t^0))$ as the "supervisor".
At the $t$-th outer loop, DQN performs the so-called target update as
\begin{equation}
    \theta_{t+1}^0 = (1-\tau)\theta_{t}^0 + \tau\theta_t^{Y}.
\end{equation}
In practice, when one of the momentum-based optimizers is adopted for~\Cref{eq:DQNsuperLearning}, such as Adam, it is only initialized once at the beginning of the first inner loop. The historical gradient terms then accumulate throughout multiple inner loops with different targets. 

Generally speaking, the difference between Q-Adam/Q-AdamR and DQN mainly lies in Q-Adam/Q-AdamR takes one-step Q-function update and one-step model parameter fitting alternatively, whereas DQN takes one-step Q-function update followed by a sufficient large number of steps for model parameter fitting (towards a target Q-function). 

\subsection{Linear Quadratic Regulator}\label{subsec:LQR}
We numerically validate the performance of Q-Adam and Q-AdamR through an infinite-horizon discrete-time LQR problem. A typical model-based solution (with known dynamics), known as the discrete-time algebraic Riccati equation (DARE), is adopted to derive the optimal policy $u_t^{\star} = -K^{\star}x_t$. The performance of the learning algorithm is then evaluated at each step of iterate $t$ with the Euclidean norm $\norm{K_t - K^{\star}}$. Given the problem nature of LQR, we also re-scale the loss term of $L(\theta_t) := \hat{Q}_{t}(s, a; \theta_{t}) - T\hat Q(s,a;\theta_t)$ in~\Cref{eq:QthetaUpdate} as $\tilde{L}(\theta_t) = \tilde{\tau}^2L(\theta_t)$ with some scaling factor $\tilde{\tau} \in (0,1]$, which is beneficial for stabilizing the learning process. The performance result for each method is averaged over 10 trials with different random seeds. All algorithms share the same set of random seeds and are initialized with the same $\theta_0$. The hyper-parameters of the learning settings are also consistent and further details are shown in Table~\ref{tbl:lqr}. Note that for all the implementations, we also adopt the double Q-update~\cite{van2016deep} to help prevent over-estimations of the Q-value. The performance results are provided in Figure~\ref{fig: lqr}. Here we highlight the main observations from the LQR experiments.

\begin{table}[t]
\centering
\begin{tabular}{c c c c c c c c }
\toprule
Step size  & $\tilde{\tau}$  & Adam $\beta_1$ & Adam $\beta_2$  \\ 
0.0001    & 0.01            & 0.9            & 0.999 \\ \hline
Restart period $r$ & \multicolumn{2}{c}{Stop criterion}  & $\gamma$ \\
100                & \multicolumn{2}{c}{$\norm{K_i - K^{\star}}_2 \leq 10^{-4}$} & 1 \\
\bottomrule
\end{tabular}
\caption{Hyper-parameters for LQR experiments.}
\label{tbl:lqr}
\end{table}

\paragraph{Q-AdamR outperforms DARE.} In ideal cases where data sampling perfectly emulates the system dynamics and the target is accurately learned in each inner loop, DARE for LQR would become equivalent to the DQN-like update if the neural network is replaced with a parameterized linear function. In practice, such ideal conditions are difficult to satisfy, and hence the actual Q-learning with target update is usually far slower (in terms of the number of steps of target updates) than DARE. Note that Q-AdamR performs significantly well and even converges faster than DARE, and thus implies it is faster than the most well-performing Q-learning with target update.

\paragraph{Q-AdamR outperforms Q-Adam.} Overall, under the same batch sampling scheme and restart period, Q-AdamR achieves a faster convergence and smaller variance than Q-Adam. 

\begin{figure}[t]
\begin{minipage}{.475\textwidth}
    \centering
    \includegraphics[width=\textwidth]{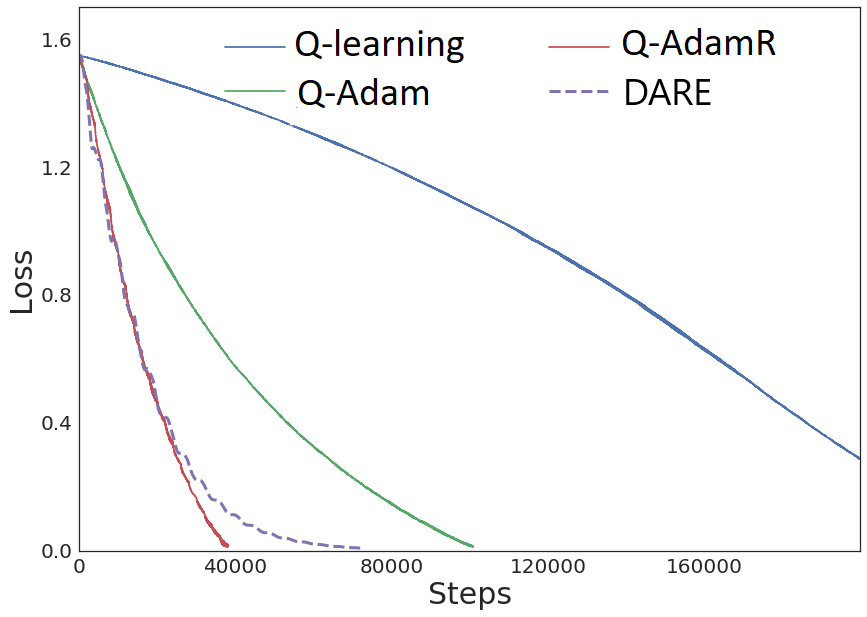}
    \caption{LQR experiments with performance evaluated in terms of policy loss $\norm{K_t - K^{\star}}_2$.}
    \label{fig: lqr}
\end{minipage}
\vspace{1em}

\begin{minipage}{.475\textwidth}
    \centering
    \includegraphics[width=\textwidth]{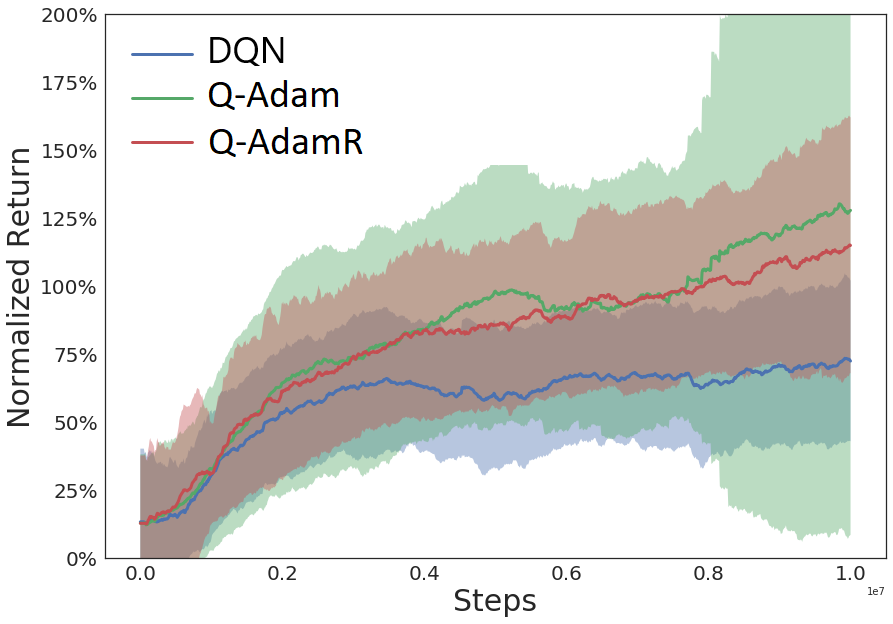}
    \caption{Atari game experiment with performance normalized and averaged over 23 games.}
    \label{fig: dqn}
\end{minipage}
\end{figure}

\subsection{Atari Games}
We apply the Q-Adam and Q-AdamR algorithms to the more challenging tasks of deep convolutional neural network playing a group of Atari 2600 games. The particular DQN we train to compare against adopts the dueling network structure~\cite{wang2015dueling}, double Q-learning setup~\cite{van2016deep}, $\epsilon$-greedy exploration and experience replay~\cite{mnih2015human}. Adam is also adopted, without momentum restart, as the optimizer for the inner-loop supervised learning process. Q-Adam and Q-AdamR are implemented using the identical setup of network construction, exploration and sampling strategies.

We test all three algorithms with a batch of 23 Atari games. The choice of 10 million steps of iteration is a common setup for benchmark experiments with Atari games. Although this does not guarantee the best performance in comparison with more time-consuming training with 50 million steps or more, it is sufficient to illustrate different performances among the selected methods. The software infrastructure is based on the baseline implementation of OpenAI. Selections of the hyper-parameters are listed in Table~\ref{tbl:dqn}. We summarize the results in Figure~\ref{fig: dqn}. The overall performance is illustrated by first normalizing the return of each method with respect to the results obtained from DQN, and then averaging the performance of all 23 games to obtain the mean return and standard deviation. Considering we use a smaller buffer size than common practice, DQN is not consistently showing improved return over all tested games. Therefore, the self-normalized average return of DQN in Figure~\ref{fig: dqn} is not strictly increasing from $0$ to $100\%$. 

Overall, both Q-Adam and Q-AdamR achieve significant improvement in comparison with the DQN results. In particular, AltQ-Adam increases the performance by over $100\%$ in some of the tasks including Asterix, BeamRider, Enduro, Gopher, etc. However, it also illustrates certain instability with complete failure on Amidar and Assault. This is also capture by the higher variance illustrated on Figure~\ref{fig: dqn}. Periodic restart (Q-AdamR) resolves this issue efficiently with an on-par performance on average and far smaller variance. In terms of the maximum average return, Q-Adam and Q-AdamR perform no worse then DQN on 17 and 20 games respectively out of the 23 games being evaluated. Furthermore, if we consider having a final score that is smaller or equal to the start score as a learning failure, DQN fails the learning in 5 out of 23 games (Asteroids, DoubleDunk, Gravitar, Pitfall, Tennis), Q-Adam fails in 3 out of 23 games (Amidar, Assault, Asteroids) and Q-AdamR does not fail in any of the tasks. That is, Q-AdamR not only reduces the variance, but also provides a more consistent performance across the task domain. This implies that momentum restart effectively corrects the accumulated error and stabilizes the training process.  

\begin{table}[t]
\centering
\begin{tabular}{c c c c }
\toprule
Step size           & Scale factor $\tilde{\tau}$  & Adam $\beta_1$            & Adam $\beta_2$  \\  
0.0001              & 0.0001                       & 0.9                       & 0.999     \\ \hline 
$r$  & Buffer size                  & $\gamma$                  & Batch size $B$     \\  
$10^4$              & $10^5$                       & 0.99                      & 32                           \\ \hline
\multicolumn{2}{c}{Total training steps $K$} & \multicolumn{2}{c}{Target update frequency}    \\
\multicolumn{2}{c}{$10^7$ } & \multicolumn{2}{c}{$10^4$}                     \\
\bottomrule
\end{tabular}
\caption{Hyper-parameters for Atari games experiments of DQN, Q-Adam and Q-AdamR.}
\label{tbl:dqn}
\end{table}

\section{Conclusion}\label{sec:conclusion}
We study two Q-learning algorithms with Adam-type updates, and demonstrate their superior performance over the vanilla Q-learning and DQN algorithms through a linear quadratic regulator problem and a batch of 23 Atari 2600 games. 

It is of considerable future interest to further investigate the potential of the restart scheme. One possible direction is to develop an adaptive restart mechanism with changing period determined by an appropriately defined signal of restart. This will potentially relieve the effort in hyper-parameter tuning of finding a good fixed period. 

\section*{Acknowledgements}
The work was supported in part by the U.S. National Science Foundation under Grants CCF-1761506, ECCS-1818904, CCF-1909291 and CCF-1900145, and the startup funding of the Southern University of Science and Technology (SUSTech), China.

\newpage

\bibliographystyle{named}
\bibliography{ijcai20}

\newpage
\onecolumn
\appendix

\noindent {\Large \textbf{Supplementary Material}}

\section{Further Details and Results on Experiments}
We discuss more details on the experiment setup and provide further results that are not included in Section \ref{sec:experiments}.

\subsection{Linear Quadratic Regulator}\label{subsec:lqrApp}
The linear quadratic regulator (LQR) problem is of great interest for control community where Lewis et al. applies PQL to both discrete-time problems~\cite{lewis2009reinforcement} and continuous-time problems~\cite{vamvoudakis2017q,vrabie2009adaptive}. 

We empirically validate the proposed algorithms through an infinite-horizon discrete-time LQR problem defined as
\begin{align}
    \label{eq:lqr-problem}
    & \underset{\pi}{\text{minimize}}
    & & J = \sum_{t=0}^{\infty}\left( x_t^T Q x_t + u_t^T R u_t + 2x_t^TNu_t\right), \nonumber\\
    & \text{subject to}
    & & x_{t+1} = Ax_t + Bu_t,\nonumber
\end{align}
where $u_t = \pi(x_t)$. 

A typical model-based solution (with known $A$ and $B$) considers the problem backwards in time and iterates a dynamic equation known as the discrete-time algebraic Riccati equation (DARE):
\begin{equation}
    \label{eq: DARE}
    P = A^TPA - (A^TPB + N)(R + B^TPB)^{-1}(B^TPA + N^T) + Q,
\end{equation}
with the cost-to-go $P$ being positive definite. The optimal policy satisfies $u_t^{\star} = -K^{\star}x_t$ with
\begin{align}
    \label{eq: optimal linear solution}
    K^{\star} = (R + B^TPB)^{-1} (N^T + B^TPA).
\end{align}

For experiments, we parameterize a quadratic Q-function with a matrix parameter $H$ in the form of
\begin{align}
    \label{eq: Q-function for lqr}
    Q(x, u ;H) = 
    \left[ \begin{array}{c} x \\u\end{array} \right]^T \left[ \begin{array}{c c} H_{xx} &H_{xu} \\ H_{ux} &H_{uu}\end{array} \right] \left[ \begin{array}{c} x \\u\end{array} \right].
\end{align}
The corresponding linear policy satisfies $u = -Kx$, and $K = H_{uu}^{-1}H_{ux}$. The performance of the learning algorithm is then evaluated at each step of iterate $i$ with the Euclidean norm $\norm{K_i - K^{\star}}_2$. 

\subsection{Atari Games}
We list detailed experiments of the 23 Atari games evaluated with the proposed algorithms in Figure~\ref{fig:dqn_all_exp}. All experiments are executed with the same set of two random seeds. Each task takes about 20-hour of wall-clock time on a GPU instance. All three methods being evaluated share similar training time. AltQ-Adam and AltQ-AdamR can be further accelerated in practice with a more memory-efficient implementation considering the target network is not required. We keep our implementation of proposed algorithms consistent with the DQN we are comparing against. Other techniques that are not included in this experiment are also compatible with AltQ-Adam and AltQ-AdamR, such like asynchronous exploration~\cite{mnih2013playing} and training with decorrelated loss~\cite{mavrin2019deep}.

Overall, AltQ-Adam significantly increases the performance by over $100\%$ in some of the tasks including Asterix, BeamRider, Enduro, Gopher, etc. However, it also illustrates certain instability with complete failure on Amidar and Assault. This is mostly caused by the sampling where we are using a relevantly small buffer size with $10\%$ of the common configured size in Atari games with experience replay. Notice that those failures tend to appear when the $\epsilon$-greedy exploration has evolved to a certain level where the immediate policy is effectively contributing to the accumulated experience. This potentially amplifies the biased exploration that essentially leads to the observed phenomenon. 

Interstingly, AltQ-AdamR that incorporates the restart scheme resolves the problem of high variance of average return brought by AltQ-Adam and provides a more consistent performance across the task domain. This implies that momentum restart effectively corrects the accumulated error and stabilizes the training process.

\begin{figure}[H]
    \centering
    \includegraphics[width=0.88\textwidth]{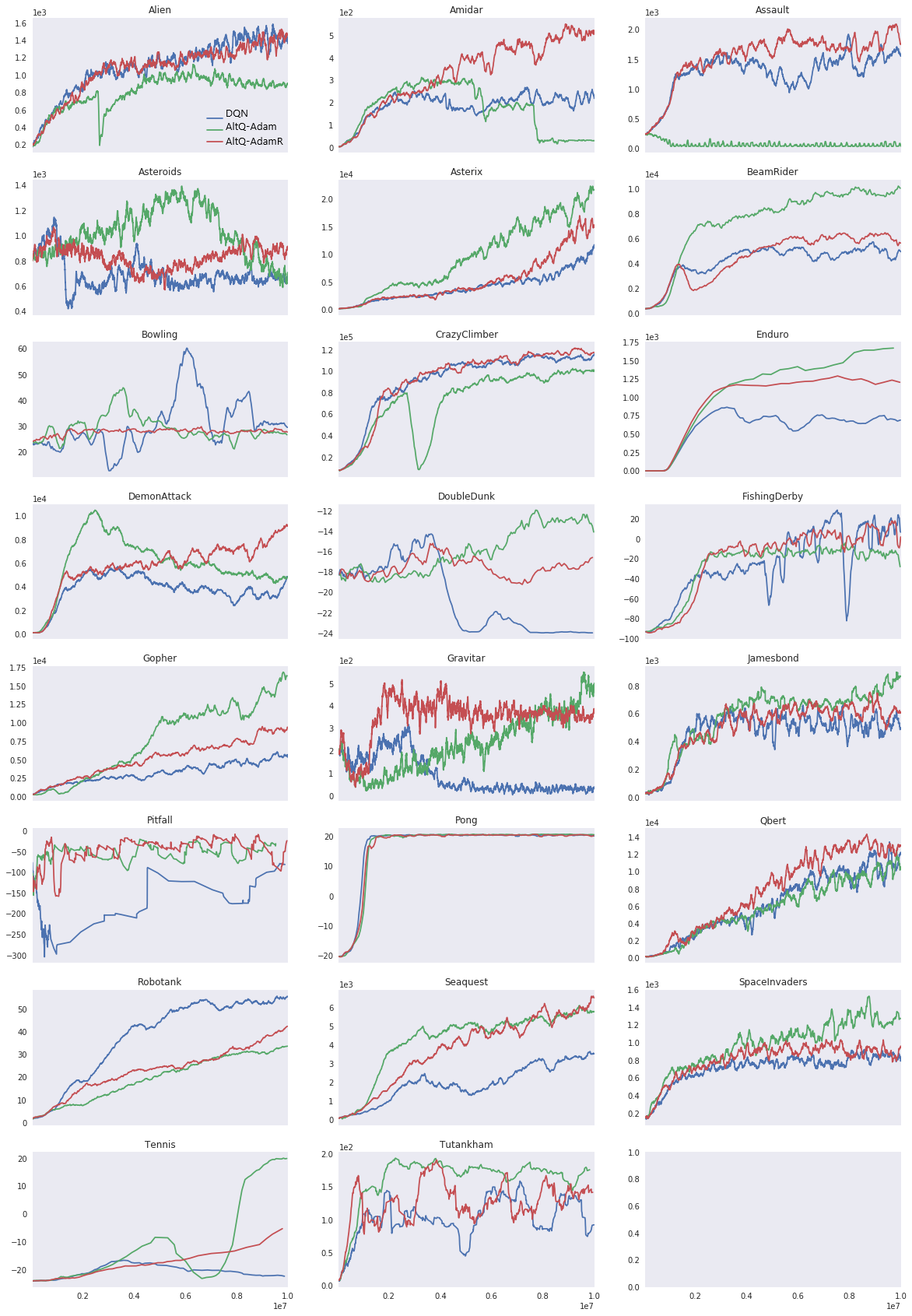}
    \caption{Experiment results of 23 Atari games with DQN, AltQ-Adam and AltQ-AdamR}
    \label{fig:dqn_all_exp}
\end{figure}

\begin{table}[H]
    \centering
    \begin{tabular}{c|c|c|c}
    \toprule
        Task & DQN & AltQ-Adam & AltQ-AdamR \\ \hline
        Alien       & 1529          & 1125           & \textbf{1587} \\
        Amidar      & 269           & 313            & \textbf{551}  \\
        Assault     & 1925          & 260            & \textbf{2097} \\
        Asteroids   & 1147          & \textbf{1394}  & 1069 \\
        Asterix     & 11794         & \textbf{22413} & 17064 \\
        BeamRider   & 5728          & \textbf{10210} & 6458  \\
        Bowling     & \textbf{60}   & 45             & 30    \\
        CrazyClimber &116422        & 102731         & \textbf{121770} \\
        Enduro      & 866           & \textbf{1671}  & 1291 \\
        DemonAttack & 5729          & \textbf{10485} & 9273 \\
        DoubleDunk  & -14           & \textbf{-12}   & -15 \\
        FishingDerby& \textbf{29.01}& -4             & 19  \\
        Gopher      &  6066         & \textbf{16863} & 9508 \\
        Gravitar    &  316          & \textbf{551}   & 518  \\
        Jamesbond   &  663          & \textbf{899}   & 756  \\
        Pitfall     &   -76         & -20            & \textbf{-7} \\
        Pong        &  20.68        & \textbf{20.79} & 20.74 \\
        Qbert       &  13453        & 12487          & \textbf{14352} \\
        Robotank    &  \textbf{56}  & 34             & 42 \\
        Seaquest    &  3652         & 6121           & \textbf{6624} \\
        Spaceinvaders& 923          & \textbf{1528}  & 1036 \\
        Tennis      & -17           & \textbf{20}    & -5 \\
        Tutankham   & 159           & \textbf{194}   & 191 \\
    \bottomrule
    \end{tabular}
    \caption{Best empirical return of 23 Atari games with DQN, AltQ-Adam and AltQ-AdamR}
    \label{tab:my_label}
\end{table}

\section{ Proof of Lemma \ref{lem:mvbound} }

The proof can be proceeded by the assumptions on the bounded kernel function and the bounded domains, which yields
\begin{align*}
    g_t  &= \left(\phi^T(s_t, a_t)\theta_t - R(s_t,a_t) - \gamma\underset{a'}{\max}\phi^T(s_{t+1},a')\theta_t\right)\phi(s_t, a_t)\\
    &\overset{\text{(i)}}{\leq}\norm{\phi^T(s_t, a_t)\theta_t - R(s_t,a_t) - \gamma\underset{a'}{\max}\phi^T(s_{t+1},a')\theta_t}\norm{\phi(s_t, a_t)}\\
    &\overset{\text{(ii)}}{\leq} \norm{\phi^T(s_t, a_t)\theta_t} + \norm{R(s_t,a_t)} + \gamma\norm{\underset{a'}{\max}\phi^T(s_{t+1},a')\theta_t}\\
    &\overset{\text{(iii)}}{\leq} \norm{g_t} \leq R_{\max} + (1+\gamma) D_{\infty},
\end{align*}
where (i) follows from Cauchy-Schwarz inequality, (ii) follows from the triangle inequality and Assumption \ref{asp:boundPhi} and (iii) follows from Assumptions \ref{asp:boundPhi} and \ref{asp:boundedDomain}.

Recall that 
\begin{align*}
    \bar g_t = \underset{\mu}{\mathbb{E}} \left[ \left(\phi^T(s_t, \pi(s_t))\theta_t - R(s_t,\pi(s_t)) - \gamma\underset{a'}{\max}\phi^T(s_{t+1},a')\theta_t\right)\phi(s_t, \pi(s_t)) \right],
\end{align*}
where $\mu$ is the stationary distribution of the states. Then the bound of $\bar g_t$ can be obtained by using the similar steps and techniques as those of bounding $g_t$.

The bounds of $m_t$ and $\hat v_t$ can be obtained by induction.
To this end, we first check that $m_0=0$ and $\norm{m_1}=\norm{\beta_{11}g_1}\leq G_\infty$. Assume that $\norm{m_{t-1}}\leq G_\infty$, then we have
\begin{align*}
    \norm{m_{t}} &= \norm{ (1-\beta_{1t})m_{t-1} + \beta_{1t} g_t }\\
    &\leq (1-\beta_{1t})\norm{m_{t-1}} + \beta_{1t} \norm{g_t}\\
    &\leq (1-\beta_{1t})G_\infty + \beta_{1t}G_\infty\\
    &= G_\infty.
\end{align*}

We next bound $\hat v_t$ similarly. First check $\hat v_0=0$ and $\norm{\hat v_1}=\norm{\beta_2g_1^2}\leq G_\infty^2$. Suppose that $\norm{\hat v_{t-1}} \leq G_\infty^2 $. Then we have
\begin{align*}
    \norm{v_{t}} &= \norm{ (1-\beta_{2}) v_{t-1} + \beta_{2} g_t^2 }\\
    &\leq (1-\beta_{2})\norm{v_{t-1}} + \beta_{2} \norm{g_t^2 }\\
    &\leq (1-\beta_{2})\norm{\hat v_{t-1}} + \beta_{2} \norm{g_t^2 }\\
    &\leq (1-\beta_{2})G_\infty^2 + \beta_{2}G_\infty^2\\
    &= G_\infty^2.
\end{align*}
Thus we complete our proof by observing that $\norm{\hat v_t}\leq \max\{ \norm{\hat v_{t-1}},\norm{v_t}\}\leq G_\infty^2$.

\section{Proof of Theorem \ref{thm:adamStability}}

Different from the regret bound for AMSGrad obtained in~\cite{reddi2019convergence}, our analysis is on the convergence rate. In fact, a slight modification of our proof also provides the convergence rate for AMSGrad for conventional strongly convex optimization, which can be of independent interest. Moreover, our proof avoids the theoretical error in the proof in~\cite{reddi2019convergence} pointed out by~\cite{tran2019convergence}. 
Before proving the theorems, we first provide some useful lemmas.




\begin{lemma}\label{lem:mVBound}\cite[Lemma 2]{reddi2019convergence}
Let $\{m_t, \hat{V}_t\}$ for $t=1,2,\dots$ be sequences generated by Algorithm \ref{alg:opql-AMSGrad}. Given $\alpha_t, \beta_{1t}, \beta_2$ as specified in Theorem \ref{thm:adamStability}, we have
\begin{align*}
    \sum_{t=1}^{T}\alpha_t\norm{ \hat{V}_{t}^{-\frac{1}{4}}m_t }^2 &\leq \frac{\alpha }{(1-\beta_1)(1-\delta)\sqrt{1-\beta_2}}\sum_{i=1}^d\norm{g_{1:T,i}}\sqrt{\sum_{t=1}^T \frac{1}{t}}\\
    &\leq \frac{\alpha \sqrt{1+\log T}}{(1-\beta_1)(1-\delta)\sqrt{1-\beta_2}}\sum_{i=1}^d\norm{g_{1:T,i}}.
\end{align*}
\end{lemma}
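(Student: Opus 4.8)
The statement to prove is Lemma~\ref{lem:mVBound}, a bound on the telescoped quantity $\sum_{t=1}^{T}\alpha_t\norm{\hat{V}_t^{-1/4}m_t}^2$ in terms of $\sum_{i=1}^d\norm{g_{1:T,i}}\sqrt{\sum_{t=1}^T 1/t}$. Since this is attributed to \cite[Lemma 2]{reddi2019convergence}, the plan is to reprove it in the present notation. I would work coordinatewise: fix $i\in[d]$ and expand $m_{t,i}$ using the recursion $m_{t,i}=(1-\beta_{1t})m_{t-1,i}+\beta_{1t}g_{t,i}$, which unrolls to a convex-combination-type sum $m_{t,i}=\sum_{j=1}^t \Big(\prod_{k=j+1}^t(1-\beta_{1k})\Big)\beta_{1j}\,g_{j,i}$. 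The first step is thus to write $m_{t,i}^2/\sqrt{\hat v_{t,i}}$, bound $\prod_{k=j+1}^t(1-\beta_{1k})\le (1-\beta_1)^{t-j}$ (using $\beta_{1k}\le\beta_1$) and apply Cauchy--Schwarz (or the convexity of $x\mapsto x^2$ with the weights $(1-\beta_1)^{t-j}$) to pull the square inside the sum, picking up a geometric factor $\frac{1}{1-\beta_1}$.

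\textbf{Key steps in order.} (1) After the coordinatewise expansion and Cauchy--Schwarz, I get $\alpha_t\,m_{t,i}^2/\sqrt{\hat v_{t,i}} \le \frac{\alpha_t}{1-\beta_1}\sum_{j=1}^t (1-\beta_1)^{t-j}\,g_{j,i}^2/\sqrt{\hat v_{t,i}}$. (2) Use the AMSGrad monotonicity $\hat v_{t,i}\ge \hat v_{j,i}\ge (1-\beta_2)\beta_2$-type lower bound — more precisely $\hat v_{t,i}\ge v_{j,i}$ for $t\ge j$ is not quite what is needed; instead one uses $\hat v_{t,i}\ge \hat v_{j,i}$ and the fact that $\hat v_{j,i}\ge \beta_2 g_{j,i}^2(1-\beta_2)^{\,0}$... actually the cleaner route is $\sqrt{\hat v_{t,i}}\ge \sqrt{v_{j,i}}\ge \sqrt{\beta_2}\,|g_{j,i}|\,\sqrt{(1-\beta_2)^{t-j}}$ is also awkward. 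The standard trick: bound $\sqrt{\hat v_{t,i}}\ge \sqrt{(1-\beta_2)}\sqrt{\beta_2}\,|g_{j,i}|$ is false in general. The correct and simplest bound from \cite{reddi2019convergence} is $\hat v_{t,i}\ge v_{t,i}\ge \beta_2(1-\beta_2)^{t-j}g_{j,i}^2$ — no; let me just say: I will use the bound $g_{j,i}^2/\sqrt{\hat v_{t,i}}\le g_{j,i}^2/\sqrt{\hat v_{j,i}}\le |g_{j,i}|/\sqrt{\beta_2}$, which follows since $\hat v_{j,i}\ge v_{j,i}\ge \beta_2 g_{j,i}^2$. (3) With $\alpha_t=\alpha/\sqrt t$, substitute and swap the order of summation over $t$ and $j$: $\sum_{t=1}^T\sum_{j=1}^t(1-\beta_1)^{t-j}\frac{1}{\sqrt t}\frac{|g_{j,i}|}{\sqrt{\beta_2}} = \frac{1}{\sqrt{\beta_2}}\sum_{j=1}^T|g_{j,i}|\sum_{t=j}^T\frac{(1-\beta_1)^{t-j}}{\sqrt t}$. (4) Bound $\sum_{t=j}^T (1-\beta_1)^{t-j}/\sqrt t \le \frac{1}{\sqrt j}\cdot\frac{1}{\beta_1}$... but this loses the $\sqrt{1/t}$ structure of the final bound; instead keep $\sum_{t=j}^T(1-\beta_1)^{t-j}/\sqrt t$ and bound it more carefully, or — following Reddi et al. — apply Cauchy--Schwarz across $j$ against $\sqrt{\sum_t 1/t}$ after first summing the geometric series to get the factor $\frac{1}{1-\delta}$ where $\delta=\beta_1/\beta_2$. (5) Finally, bound $\sum_{t=1}^T 1/t \le 1+\log T$ for the second inequality.

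\textbf{Main obstacle.} The delicate point is step (4): getting precisely the constant $\frac{\alpha}{(1-\beta_1)(1-\delta)\sqrt{1-\beta_2}}$ and the factor $\sqrt{\sum_{t=1}^T 1/t}$ requires organizing the double sum so that one application of Cauchy--Schwarz produces $\sum_i\norm{g_{1:T,i}}$ (the $\ell_2$ norm of the coordinate-$i$ gradient history) times $\sqrt{\sum 1/t}$, rather than a looser bound. Concretely, after $\sum_t\alpha_t m_{t,i}^2/\sqrt{\hat v_{t,i}}\le \frac{\alpha}{(1-\beta_1)\sqrt{1-\beta_2}}\sum_{t=1}^T\frac{1}{\sqrt t}\sum_{j\le t}\delta^{t-j}\frac{g_{j,i}^2}{\sqrt{\hat v_{j,i}}}$ (using $\hat v_{t,i}\ge\hat v_{j,i}$ and absorbing $\beta_2$ factors into $\delta$), one swaps sums, bounds the inner geometric sum $\sum_{t\ge j}\delta^{t-j}\le\frac1{1-\delta}$, leaving $\frac{\alpha}{(1-\beta_1)(1-\delta)\sqrt{1-\beta_2}}\sum_{j=1}^T\frac{g_{j,i}^2}{\sqrt j\,\sqrt{\hat v_{j,i}}}$, and then Cauchy--Schwarz in $j$ gives $\big(\sum_j g_{j,i}^2/\sqrt{\hat v_{j,i}}\big)^{1/2}\big(\sum_j g_{j,i}^2/(\sqrt{\hat v_{j,i}}\,j)\big)^{1/2}$; the first factor is $\le\norm{g_{1:T,i}}/(\text{const})$... this is exactly the Reddi et al.\ bookkeeping and I expect to simply cite it, reproducing the chain of inequalities with the paper's $\Pi_{\mathcal D,\hat V_t^{1/4}}$-projected iterates (which only enters through not affecting the $m_t,\hat v_t$ recursions). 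The projection plays no role in this lemma, so the proof is a self-contained optimization fact; the only care needed is consistency of constants with the conventions $\alpha_t=\alpha/\sqrt t$, $\beta_{1t}=\beta_1\lambda^t\le\beta_1$, and $\delta=\beta_1/\beta_2$ fixed in Theorem~\ref{thm:adamStability}.
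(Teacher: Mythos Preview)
The paper does not give its own proof of this lemma; it is imported verbatim from \cite[Lemma~2]{reddi2019convergence}, and your proposal is precisely an outline of that argument. The overall structure---coordinatewise expansion of $m_{t,i}$, Cauchy--Schwarz to push the square inside the sum, AMSGrad monotonicity $\hat v_{t,i}\ge \hat v_{j,i}$, swap of summation order, a geometric series yielding the factor $1/(1-\delta)$, and a final Cauchy--Schwarz to extract $\norm{g_{1:T,i}}\sqrt{\sum_t 1/t}$---is correct and matches the cited source.

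One concrete slip to fix: you unroll the recursion as literally written in Algorithm~\ref{alg:opql-AMSGrad}, namely $m_t=(1-\beta_{1t})m_{t-1}+\beta_{1t}g_t$, and then claim $\prod_{k=j+1}^t(1-\beta_{1k})\le(1-\beta_1)^{t-j}$ ``using $\beta_{1k}\le\beta_1$.'' That inequality is in the wrong direction: $\beta_{1k}\le\beta_1$ gives $1-\beta_{1k}\ge 1-\beta_1$, so the product is bounded \emph{below} by $(1-\beta_1)^{t-j}$, not above, and the geometric decay you need does not follow. This is really a symptom of an inconsistency in the paper itself: the proof of Theorem~\ref{thm:adamStability} in the appendix (and the original \cite{reddi2019convergence}) uses the opposite convention $m_t=\beta_{1t}m_{t-1}+(1-\beta_{1t})g_t$, under which the relevant product is $\prod_{k=j+1}^t\beta_{1k}\le\beta_1^{t-j}$ and your chain of inequalities goes through with the stated constants. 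So the remedy is simply to work with the Reddi et al.\ convention---which is what the lemma's constants $(1-\beta_1)^{-1}(1-\delta)^{-1}(1-\beta_2)^{-1/2}$ are calibrated to---rather than the one printed in the algorithm box.
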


\begin{lemma}\label{lem:seqSum}
Let $\alpha_t=\frac{\alpha}{\sqrt{t}}$ and $\beta_{1t} = \beta_1 \lambda^t$ for $t=1,2,\dots$. Then
\begin{equation}
    \sum_{t=1}^T \frac{\beta_{1t}}{\alpha_t} \leq \frac{\beta_1}{\alpha(1-\lambda)^2}.
\end{equation}
\end{lemma}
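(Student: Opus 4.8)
The plan is to bound the sum $\sum_{t=1}^T \frac{\beta_{1t}}{\alpha_t}$ by noting that $\alpha_t = \frac{\alpha}{\sqrt t}$ and $\beta_{1t} = \beta_1\lambda^t$, so that
\begin{equation}
    \sum_{t=1}^T \frac{\beta_{1t}}{\alpha_t} = \sum_{t=1}^T \frac{\beta_1\lambda^t\sqrt t}{\alpha} = \frac{\beta_1}{\alpha}\sum_{t=1}^T \sqrt t\,\lambda^t.
\end{equation}
Since the remaining factor $\beta_1/\alpha$ pulls out cleanly, the heart of the matter is the elementary estimate $\sum_{t=1}^T \sqrt t\,\lambda^t \le \frac{1}{(1-\lambda)^2}$, and in fact it suffices to prove $\sum_{t=1}^\infty \sqrt t\,\lambda^t \le \frac{1}{(1-\lambda)^2}$ since all terms are nonnegative.

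First I would use the crude but convenient bound $\sqrt t \le t$ for every integer $t\ge 1$, which gives $\sum_{t=1}^\infty \sqrt t\,\lambda^t \le \sum_{t=1}^\infty t\,\lambda^t$. Then I would invoke the standard power-series identity $\sum_{t=1}^\infty t\,\lambda^t = \frac{\lambda}{(1-\lambda)^2}$, valid for $\lambda\in(0,1)$ (obtained by differentiating the geometric series $\sum_{t\ge 0}\lambda^t = \frac{1}{1-\lambda}$ and multiplying by $\lambda$). Finally, since $\lambda < 1$, we have $\frac{\lambda}{(1-\lambda)^2} \le \frac{1}{(1-\lambda)^2}$, which closes the chain of inequalities. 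Putting the pieces together yields $\sum_{t=1}^T \frac{\beta_{1t}}{\alpha_t} \le \frac{\beta_1}{\alpha}\cdot\frac{1}{(1-\lambda)^2} = \frac{\beta_1}{\alpha(1-\lambda)^2}$, as claimed.

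There is essentially no obstacle here; the only minor point to watch is making sure the telescoping/differentiation identity for $\sum t\lambda^t$ is cited or derived correctly and that the inequality $\sqrt t \le t$ does not throw away so much that the target bound fails — but since the target already has a loose constant (a factor $1/(1-\lambda)^2$ rather than $\lambda/(1-\lambda)^2$), the slack is comfortably absorbed. If one wanted a tighter argument avoiding $\sqrt t \le t$, one could instead bound $\sqrt t \le (t+1)/2$ or use an integral comparison, but this is unnecessary for the stated bound. The result will then be used, together with Lemma~\ref{lem:mVBound} and the strong-convexity-type Assumption~\ref{asp:qlearning}, to control the momentum-bias terms in the telescoping decomposition of $\mathbb{E}\norm{\theta_{t+1}-\theta^\star}^2$ in the proof of Theorem~\ref{thm:adamStability}.
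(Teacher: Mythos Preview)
Your proof is correct and follows essentially the same route as the paper: substitute $\alpha_t$ and $\beta_{1t}$, use $\sqrt t\le t$, and bound via the arithmetico-geometric identity $\sum_{t\ge 1} t\lambda^t = \lambda/(1-\lambda)^2 \le 1/(1-\lambda)^2$. The only cosmetic difference is that the paper keeps the sum finite and shifts the index to $\lambda^{t-1}t$ before applying the closed form, whereas you pass to the infinite series directly; both arguments are equivalent.
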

\begin{proof}
The proof is based on taking the standard sum of geometric sequences.
\begin{equation}
    \sum_{t=1}^T \frac{\beta_{1t}}{\alpha_t} = \sum_{t=1}^T \frac{\beta_{1t}\sqrt{t}}{\alpha} \leq \sum_{t=1}^T \frac{\beta_{1}\lambda^{t-1}t} {\alpha} = \frac{\beta_1}{\alpha} \left( \frac{1}{(1-\lambda)}\sum_{t=1}^T \lambda^{t-1} -T\lambda^T \right) \leq \frac{\beta_1}{\alpha(1-\lambda)^2}.
\end{equation}
\end{proof}

With the lemmas above, we are ready to prove Theorem \ref{thm:adamStability}. Observe that
$$\theta_{t+1} = \Pi_{\mathcal{D}, \hat V_t^{1/4}} \left( \theta_t - \alpha_t \hat{V}_t^{-\frac{1}{2}}m_t\right)=\underset{\theta\in\mathcal{D}}{\min}\norm{\hat V_t^{1/4}\left(\theta_t - \alpha_t \hat{V}_t^{-\frac{1}{2}}m_t - \theta\right)}.$$
Clearly $\Pi_{\mathcal{D}, \hat V_t^{1/4}}(\theta^\star)=\theta^\star$ due to Assumption \ref{asp:boundedDomain}. We start from the update of $\theta_t$ when $t\geq 2$.
\begin{align*}
    \norm{\hat V_t^{1/4}(\theta_{t+1}-\theta^\star)}^2
    &= \norm{\Pi_{\mathcal{D}, \hat V_t^{1/4}}\hat V_t^{1/4}\left(\theta_{t} - \theta^\star - \alpha_t\hat{V}_{t}^{-\frac{1}{2}}m_t\right)}^2\\
    &\leq \norm{\hat V_t^{1/4}\left(\theta_{t} - \theta^\star - \alpha_t\hat{V}_{t}^{-\frac{1}{2}}m_t\right)}^2\\
    &= \norm{\hat V_t^{1/4}(\theta_{t}-\theta^\star)}^2 + \norm{\alpha_t\hat{V}_{t}^{-1/4}m_t}^2 - 2\alpha_t(\theta_t - \theta^\star)^T m_t\\
    &= \norm{\hat V_t^{1/4}(\theta_{t}-\theta^\star)}^2 + \norm{\alpha_t\hat{V}_{t}^{-1/4}m_t}^2 - 2\alpha_t(\theta_t - \theta^\star)^T (\beta_{1t}m_{t-1} + (1-\beta_{1t})g_{t})\\
    &\overset{\text{(i)}}{\leq}  \norm{\hat V_t^{1/4}(\theta_{t}-\theta^\star)}^2 + \norm{\alpha_t\hat{V}_{t}^{-1/4}m_t}^2+ \alpha_t\beta_{1t}\left( \frac{1}{\alpha_t}\norm{\hat V_t^{1/4}(\theta_{t}-\theta^\star)}^2+\alpha_t\norm{ \hat{V}_{t}^{-1/4}m_{t-1} }^2 \right) \\
    &\quad - 2\alpha_t(1-\beta_{1t})(\theta_t - \theta^\star)^T g_{t}\\
    &\overset{\text{(ii)}}{\leq} \norm{\hat V_t^{1/4}(\theta_{t}-\theta^\star)}^2 + \norm{\alpha_t\hat{V}_{t}^{-1/4}m_t}^2 + \beta_{1t}\norm{\hat V_t^{1/4}(\theta_{t}-\theta^\star)}^2+\alpha_t^2\beta_{1t}\norm{ \hat{V}_{t-1}^{-1/4}m_{t-1} }^2\\
    &\quad  - 2\alpha_t(1-\beta_{1t})(\theta_t - \theta^\star)^T g_{t},
\end{align*}
where (i) follows from Cauchy-Schwarz inequality, and (ii) holds because $\hat{v}_{t+1,i} \geq \hat{v}_{t,i},\forall t, \forall i$. 
Next, we take the expectation over all samples used up to time step $t$ on both sides, which still preserves the inequality.
Since we consider i.i.d. sampling case, by letting $\mathcal{F}_t$ be the filtration of all the sampling up to time $t$, we have
\begin{equation}\label{eq:pf2}
    \mathbb{E}\left[(\theta_t - \theta^\star)^T g_{t}\right] = \mathbb{E}\left[\mathbb{E}\left[(\theta_t - \theta^\star)^T g_{t}\right]|\mathcal{F}_{t-1}\right] = \mathbb{E}\left[(\theta_t - \theta^\star)^T \bar g_{t}\right].
\end{equation}

Thus we have
\begin{align*}
    \mathbb{E}\norm{\hat V_t^{1/4}(\theta_{t+1}-\theta^\star)}^2
    &\leq \mathbb{E}\norm{\hat V_t^{1/4}(\theta_{t}-\theta^\star)}^2 + \alpha_t^2\mathbb{E}\norm{\hat{V}_{t}^{-1/4}m_t}^2 + \beta_{1t}\mathbb{E}\norm{\hat V_t^{1/4}(\theta_{t}-\theta^\star)}^2+ \alpha_t^2\beta_{1t}\mathbb{E}\norm{ \hat{V}_{t-1}^{-1/4}m_{t-1} }^2\\
    &\quad  - 2\alpha_t(1-\beta_{1t})\mathbb{E}\left[(\theta_t - \theta^\star)^T g_{t}\right]\\
    &\overset{\text{(i)}}{=} \mathbb{E}\norm{\hat V_t^{1/4}(\theta_{t}-\theta^\star)}^2 + \alpha_t^2\mathbb{E}\norm{\hat{V}_{t}^{-1/4}m_t}^2 + \beta_{1t}\mathbb{E}\norm{\hat V_t^{1/4}(\theta_{t}-\theta^\star)}^2+ \alpha_t^2\beta_{1t}\mathbb{E}\norm{ \hat{V}_{t-1}^{-1/4}m_{t-1} }^2\\
    &\quad  - 2\alpha_t(1-\beta_{1t})\mathbb{E}\left[(\theta_t - \theta^\star)^T \bar g_{t}\right]\\
    &\overset{\text{(ii)}}{\leq} \mathbb{E}\norm{\hat V_t^{1/4}(\theta_{t}-\theta^\star)}^2 + \alpha_t^2\mathbb{E}\norm{\hat{V}_{t}^{-1/4}m_t}^2 + \beta_{1t}\mathbb{E}\norm{\hat V_t^{1/4}(\theta_{t}-\theta^\star)}^2+ \alpha_t^2\beta_{1t}\mathbb{E}\norm{ \hat{V}_{t-1}^{-1/4}m_{t-1} }^2\\
    &\quad  - 2\alpha_t c(1-\beta_{1t})\mathbb{E}\norm{\theta_t - \theta^\star}^2\\
    &\overset{\text{(iii)}}{\leq} \mathbb{E}\norm{\hat V_t^{1/4}(\theta_{t}-\theta^\star)}^2 + \alpha_t^2\mathbb{E}\norm{\hat{V}_{t}^{-1/4}m_t}^2 + \beta_{1t}\mathbb{E}\norm{\hat V_t^{1/4}(\theta_{t}-\theta^\star)}^2+ \alpha_t^2\beta_{1}\mathbb{E}\norm{ \hat{V}_{t-1}^{-1/4}m_{t-1} }^2\\
    &\quad  - 2\alpha_t c(1-\beta_{1})\mathbb{E}\norm{\theta_t - \theta^\star}^2\\
    &\overset{\text{(iv)}}{\leq} \mathbb{E}\norm{\hat V_t^{1/4}(\theta_{t}-\theta^\star)}^2 + \alpha_t^2\mathbb{E}\norm{\hat{V}_{t}^{-1/4}m_t}^2 + G_\infty D_\infty^2\beta_{1t}+ \alpha_t^2\beta_{1}\mathbb{E}\norm{ \hat{V}_{t-1}^{-1/4}m_{t-1} }^2\\
    &\quad  - 2\alpha_t c(1-\beta_{1})\mathbb{E}\norm{\theta_t - \theta^\star}^2,
\end{align*}
where (i) follows from~\Cref{eq:pf2}, (ii) follows due to Assumption \ref{asp:qlearning} and $1-\beta_{1t}>0$, (iii) follows from $\beta_{1t}<\beta_1<1$ and $\mathbb{E}\norm{\theta_t - \theta^\star}^2>0$, and (iv) follows from $\norm{\hat V_t^{1/4}(\theta_{t}-\theta^\star)}^2\leq \norm{\hat V_t^{1/4}}_2^2\norm{\theta_{t}-\theta^\star}^2 \leq G_\infty D_\infty^2$ by Lemma \ref{lem:mvbound} and Assumption \ref{asp:boundedDomain}. We note that (iii) is the key step to avoid the error in the proof in~\cite{reddi2019convergence}, where we can directly bound $1-\beta_{1t}$, which is impossible in~\cite{reddi2019convergence}.
By rearranging the terms in the above inequality and taking the summation over time steps, we have
\begin{align*}
&2c(1-\beta_1)\sum_{t=2}^T\mathbb{E}\norm{\theta_t - \theta^\star}^2\\
&\qquad \leq \sum_{t=2}^T \frac{1}{\alpha_t} \left( \mathbb{E}\norm{\hat V_t^{1/4}(\theta_{t}-\theta^\star)}^2\! -\! \mathbb{E}\norm{\hat V_t^{1/4}(\theta_{t+1}-\theta^\star)}^2 \right) + \sum_{t=2}^T \frac{\beta_{1t}G_\infty D_\infty^2}{\alpha_t}\\
&\qquad \quad +  \sum_{t=2}^{T} \alpha_t \mathbb{E} \norm{ \hat{V}_{t}^{-1/4}m_t }^2 + \sum_{t=2}^{T} \alpha_t\beta_{1} \mathbb{E} \norm{ \hat{V}_{t-1}^{-1/4}m_{t-1} }^2 \\
&\qquad \overset{\text{(i)}}{\leq} \sum_{t=2}^T \frac{1}{\alpha_t} \left( \mathbb{E}\norm{\hat V_t^{1/4}(\theta_{t}-\theta^\star)}^2\! -\! \mathbb{E}\norm{\hat V_t^{1/4}(\theta_{t+1}-\theta^\star)}^2 \right) + \sum_{t=2}^T \frac{\beta_{1t}G_\infty D_\infty^2}{\alpha_t}\\
&\qquad \quad +  \sum_{t=2}^{T} \alpha_t \mathbb{E} \norm{ \hat{V}_{t}^{-1/4}m_t }^2 + \sum_{t=2}^{T} \alpha_{t-1}\beta_{1} \mathbb{E} \norm{ \hat{V}_{t-1}^{-1/4}m_{t-1} }^2\\
&\qquad \leq \sum_{t=2}^T \frac{1}{\alpha_t} \left( \mathbb{E}\norm{\hat V_t^{1/4}(\theta_{t}-\theta^\star)}^2\! -\! \mathbb{E}\norm{\hat V_t^{1/4}(\theta_{t+1}-\theta^\star)}^2 \right) + \sum_{t=2}^T \frac{\beta_{1t}G_\infty D_\infty^2}{\alpha_t}\\
&\qquad \quad +  (1+\beta_1)\sum_{t=1}^{T} \alpha_t \mathbb{E} \norm{ \hat{V}_{t}^{-1/4}m_t }^2,
\end{align*}
where (i) follows from $\alpha_t < \alpha_{t-1}$. With further adjustment of the first term in the right hand side of the last inequality, we can then bound the sum as
\begin{align*}
2c(1&-\beta_1)\sum_{t=2}^T\mathbb{E}\norm{\theta_t - \theta^\star}^2\\
&\leq \sum_{t=2}^T \frac{1}{\alpha_t} \mathbb{E}\left( \norm{\hat V_t^{1/4}(\theta_{t}-\theta^\star)}^2 - \norm{\hat V_t^{1/4}(\theta_{t+1}-\theta^\star)}^2 \right) + \sum_{t=2}^T \frac{\beta_{1t}G_\infty D_\infty^2}{\alpha_t}\\
&\quad + (1+\beta_1)\sum_{t=1}^{T} \alpha_t \mathbb{E} \norm{ \hat{V}_{t}^{-1/4}m_t }^2\\
& = \frac{\mathbb{E}\norm{\hat V_2^{1/4}(\theta_{2}-\theta^\star)}^2}{\alpha_2} + \sum_{t=3}^T \mathbb{E}\left( \frac{\norm{\hat V_t^{1/4}(\theta_{t}-\theta^\star)}^2}{\alpha_t} - \frac{\norm{\hat V_{t-1}^{1/4}(\theta_{t}-\theta^\star)}^2}{\alpha_{t-1}}  \right)\\
&\quad - \frac{\mathbb{E}\norm{\hat V_{T}^{1/4}(\theta_{T+1}-\theta^\star)}^2}{\alpha_{T}} + \sum_{t=2}^T \frac{\beta_{1t}G_\infty D_\infty^2}{\alpha_t} + (1+\beta_1)\sum_{t=1}^{T} \alpha_t\mathbb{E}  \norm{ \hat{V}_{t}^{-1/4}m_t }^2\\
& = \frac{\mathbb{E}\norm{\hat V_2^{1/4}(\theta_{2}-\theta^\star)}^2}{\alpha_2} + \sum_{t=3}^T \mathbb{E}\left( \frac{\sum_{i=1}^d\hat{v}_{t,i}^{1/2}(\theta_{t,i} - \theta_{i}^\star)^2}{\alpha_t} - \frac{\sum_{i=1}^d\hat{v}_{t-1,i}^{1/2}(\theta_{t,i} - \theta_{i}^\star)^2}{\alpha_{t-1}}  \right)\\
&\quad - \frac{\mathbb{E}\norm{\hat V_{T}^{1/4}(\theta_{T+1}-\theta^\star)}^2}{\alpha_{T}} + \sum_{t=2}^T \frac{\beta_{1t}G_\infty D_\infty^2}{\alpha_t} + (1+\beta_1)\sum_{t=1}^{T} \alpha_t \mathbb{E} \norm{ \hat{V}_{t}^{-1/4}m_t }^2.\\
& = \frac{\mathbb{E}\norm{\hat V_2^{1/4}(\theta_{2}-\theta^\star)}^2}{\alpha_2} + \sum_{t=3}^T\sum_{i=1}^d \mathbb{E}(\theta_{t,i} - \theta_{i}^\star)^2\left( \frac{\hat{v}_{t,i}^{1/2}}{\alpha_t} - \frac{\hat{v}_{t-1,i}^{1/2}}{\alpha_{t-1}}  \right)\\
&\quad - \frac{\mathbb{E}\norm{\hat V_{T}^{1/4}(\theta_{T+1}-\theta^\star)}^2}{\alpha_{T}} + \sum_{t=2}^T \frac{\beta_{1t}G_\infty D_\infty^2}{\alpha_t} + (1+\beta_1)\sum_{t=1}^{T} \alpha_t \mathbb{E} \norm{ \hat{V}_{t}^{-1/4}m_t }^2.
\end{align*}
So far we just rearrange the terms in the series sum. Next, we are ready to obtain the upper bound.
\begin{align}
2c(1&-\beta_1)\sum_{t=2}^T\mathbb{E}\norm{\theta_t - \theta^\star}^2 \nonumber\\
&\overset{\text{(i)}}{\leq}\frac{\mathbb{E}\norm{\hat V_2^{1/4}(\theta_{2}-\theta^\star)}^2}{\alpha_2} + D_\infty^2\sum_{t=3}^T\sum_{i=1}^d \mathbb{E}\left( \frac{\hat{v}_{t,i}^{1/2}}{\alpha_t} - \frac{\hat{v}_{t-1,i}^{1/2}}{\alpha_{t-1}}  \right)\nonumber\\
&\quad - \frac{\mathbb{E}\norm{\hat V_{T}^{1/4}(\theta_{T+1}-\theta^\star)}^2}{\alpha_{T}} + \sum_{t=2}^T \frac{\beta_{1t}G_\infty D_\infty^2}{\alpha_t} + (1+\beta_1)\sum_{t=1}^{T} \alpha_t \mathbb{E} \norm{ \hat{V}_{t}^{-1/4}m_t }^2\nonumber\\
&\leq \frac{\mathbb{E}\norm{\hat V_2^{1/4}(\theta_{2}-\theta^\star)}^2}{\alpha_2} + D_\infty^2 \sum_{i=1}^d \mathbb{E} \frac{\hat{v}_{T,i}^{1/2}}{\alpha_T} + \sum_{t=2}^T \frac{\beta_{1t}G_\infty D_\infty^2}{\alpha_t} + (1+\beta_1)\sum_{t=1}^{T} \alpha_t \mathbb{E} \norm{ \hat{V}_{t}^{-1/4}m_t }^2\nonumber\\
& \overset{\text{(ii)}}{\leq} \frac{G_\infty D_\infty^2}{\alpha_2} + \frac{d G_\infty D_\infty^2\sqrt{T}}{\alpha}  + \frac{\beta_1G_\infty D_\infty^2}{\alpha(1-\lambda)^2} + \frac{\alpha (1+\beta_1)\sqrt{1+\log T}}{(1-\beta_1)(1-\delta)\sqrt{1-\beta_2}}\sum_{i=1}^d\mathbb{E} \norm{g_{1:T,i}},   \label{eq:pfRes1}
\end{align}
where (i) follows from Assumption \ref{asp:boundedDomain} and because $\frac{\hat{v}_{t,i}^{1/2}}{\alpha_t} > \frac{\hat{v}_{t-1,i}^{1/2}}{\alpha_{t-1}}$, and (ii) follows from Lemmas \ref{lem:mvbound} - \ref{lem:seqSum}.

Finally, applying the Jensen's inequality yields
\begin{equation}\label{eq:pfthm1}
    \begin{aligned}
    \mathbb{E}\norm{\theta_{out} - \theta^\star}^2 \leq \frac{1}{T}\sum_{t=1}^T\mathbb{E}\norm{\theta_t - \theta^\star}^2.
    \end{aligned}
\end{equation}

We conclude our proof by further applying the bound in~\Cref{eq:pfRes1} to~\Cref{eq:pfthm1}.

\section{Proof of Theorem \ref{thm:adamRestartStability}}

To prove the convergence for AltQ-AMSGradR, the major technical development beyond the proof of Theorem \ref{thm:adamStability} lies in dealing with the parameter restart. More specifically, the moment approximation terms are reset every $r$ steps, i.e., $m_{kr} = \hat{v}_{kr} = 0$ for $k=1,2,\dots$, which implies $\theta_{kr+1} = \theta_{kr}$ for $k=1,2,\dots$.
For technical convenience, we define $\theta_0 = \theta_1$. 
Using the arguments similar to~\Cref{eq:pfRes1}, in a time window that does not contain a restart (i.e. $kr\leq S\leq (k+1)r-1$) we have
\begin{align*}
    2c(1&-\beta_1) \sum_{t=kr}^{S} \mathbb{E}\norm{\theta_t - \theta^\star}^2\\
    &\overset{\text{(i)}}{\leq}  \frac{G_\infty D_\infty^2}{\alpha_{kr+2}} + \frac{dG_\infty D_\infty^2\sqrt{S}}{\alpha} +  \frac{\alpha (1+\beta_1)}{(1-\beta_1)(1-\delta)\sqrt{1-\beta_2}}\sum_{i=1}^d\mathbb{E}\norm{g_{kr+1:S,i}} \sqrt{\sum_{t=kr+1}^{S} \frac{1}{t}}\\
    &\quad + G_\infty D_\infty^2 \sum_{t=kr+2}^{S} \frac{\beta_{1t}}{\alpha_t} +2c(1-\beta_1)\left(  \mathbb{E}\norm{\theta_{kr+1} - \theta^\star}^2 + \mathbb{E}\norm{\theta_{kr} - \theta^\star}^2 \right)\\
    &\overset{\text{(ii)}}{=} \frac{G_\infty D_\infty^2\sqrt{kr+2}}{\alpha} + \frac{dG_\infty D_\infty^2\sqrt{S}}{\alpha} +  \frac{\alpha (1+\beta_1)}{(1-\beta_1)(1-\delta)\sqrt{1-\beta_2}}\sum_{i=1}^d\mathbb{E}\norm{g_{kr+1:S,i}} \sqrt{\sum_{t=kr+1}^{S} \frac{1}{t}}\\
    &\quad + G_\infty D_\infty^2 \sum_{t=kr+2}^{S} \frac{\beta_{1t}}{\alpha_t} +4c(1-\beta_1) \mathbb{E}\norm{\theta_{kr} - \theta^\star}^2,
    \end{align*}
where (i) follows from~\Cref{eq:pfRes1} and (ii) follows from $\theta_{kr+1} = \theta_{kr}$ due to the definition of restart. Then we take the summation over the total time steps and obtain
\begin{align*}
    2c(1&-\beta_1)\sum_{t=1}^T\mathbb{E}\norm{\theta_t - \theta^\star}^2 \\
    &= 2c(1-\beta_1)\left( \sum_{k=1}^{\lfloor T/r\rfloor}\sum_{t=(k-1)r}^{kr-1} \mathbb{E}\norm{\theta_t - \theta^\star}^2 + \sum_{t=\lfloor T/r \rfloor r}^{T} \mathbb{E}\norm{\theta_t - \theta^\star}^2 - \mathbb{E}\norm{\theta_0 - \theta^\star}^2\right)\\
    &\leq \sum_{k=0}^{\lfloor T/r \rfloor }\left( \frac{G_\infty D_\infty^2}{\alpha} \sqrt{kr+2}  +4c(1-\beta_1) \mathbb{E}\norm{\theta_{kr} - \theta^\star}^2  \right) + \sum_{k=1}^{\lfloor T/r \rfloor }\frac{dG_\infty D_\infty^2}{\alpha}\sqrt{kr-1}\\
    &\quad +  \frac{dG_\infty D_\infty^2\sqrt{T}}{\alpha} + \frac{\alpha (1+\beta_1)}{(1-\beta_1)(1-\delta)\sqrt{1-\beta_2}}\sum_{k=1}^{\lfloor T/r \rfloor }\sum_{i=1}^d\mathbb{E}\norm{g_{(k-1)r+1:kr-1,i}} \sqrt{\sum_{t=(k-1)r+1}^{kr-1} \frac{1}{t}}\\
    &\quad +  \frac{\alpha (1+\beta_1)}{(1-\beta_1)(1-\delta)\sqrt{1-\beta_2}}\sum_{i=1}^d\mathbb{E}\norm{g_{\lfloor T/r \rfloor r+1:T,i}} \sqrt{\sum_{t=\lfloor T/r \rfloor r+1}^{T} \frac{1}{t}}\\
    &\quad  +  G_\infty D_\infty^2 \sum_{k=1}^{\lfloor T/r \rfloor }\sum_{t=(k-1)r+2}^{kr-1} \frac{\beta_{1t}}{\alpha_t}+  G_\infty D_\infty^2 \sum_{t=\lfloor T/r \rfloor r+2}^{T} \frac{\beta_{1t}}{\alpha_t} \\
    &\leq \sum_{k=0}^{\lfloor T/r \rfloor }\left( \frac{G_\infty D_\infty^2}{\alpha} \sqrt{kr+2}  +4c(1-\beta_1) \mathbb{E}\norm{\theta_{kr} - \theta^\star}^2  \right) + \sum_{k=1}^{\lfloor T/r \rfloor }\frac{dG_\infty D_\infty^2}{\alpha}\sqrt{kr-1}\\
    &\quad +  \frac{dG_\infty D_\infty^2\sqrt{T}}{\alpha} + \frac{\alpha (1+\beta_1)}{(1-\beta_1)(1-\delta)\sqrt{1-\beta_2}}\sum_{k=1}^{\lfloor T/r \rfloor }\sum_{i=1}^d\mathbb{E}\norm{g_{(k-1)r+1:kr-1,i}} \sqrt{\sum_{t=(k-1)r+1}^{kr-1} \frac{1}{t}}\\
    &\quad +  \frac{\alpha (1+\beta_1)}{(1-\beta_1)(1-\delta)\sqrt{1-\beta_2}}\sum_{i=1}^d\mathbb{E}\norm{g_{\lfloor T/r \rfloor r+1:T,i}} \sqrt{\sum_{t=\lfloor T/r \rfloor r+1}^{T} \frac{1}{t}} + G_\infty D_\infty^2 \sum_{t=1}^{T} \frac{\beta_{1t}}{\alpha_t}.
\end{align*}
We can bound the term $ G_\infty D_\infty^2 \sum_{t=1}^{T} \frac{\beta_{1t}}{\alpha_t}$ by Lemma \ref{lem:seqSum}. Next, we bound another key term in the above inequality. We first observe that $\forall k\geq 2, \forall i\in [d]$,
\begin{align}
     \norm{g_{(k-1)r+1:kr-1,i}} \sqrt{\sum_{t=(k-1)r+1}^{kr-1} \frac{1}{t}}&\overset{\text{(i)}}{\leq} \norm{g_{(k-1)r+1:kr-1,i}} \sqrt{\sum_{t=(k-1)r+1}^{kr-1} \frac{1}{t}}  +|g_{kr,i}|\sqrt{\frac{1}{kr}}\nonumber\\
     &\overset{\text{(ii)}}{\leq} \norm{g_{(k-1)r+1:kr,i}} \sqrt{\sum_{t=(k-1)r+1}^{kr} \frac{1}{t}}, \label{eq:pf1}
\end{align}
where (i) holds due to $|g_{t,i}|\sqrt{\frac{1}{t}}>0$ and (ii) follows from Cauchy-Schwarz inequality. Then we have
\begin{align*}
 &\sum_{k=1}^{\lfloor T/r \rfloor }\sum_{i=1}^d\norm{g_{(k-1)r+1:kr-1,i}} \sqrt{\sum_{t=(k-1)r+1}^{kr-1} \frac{1}{t}} + \sum_{i=1}^d\norm{g_{\lfloor T/r \rfloor r+1:T,i}} \sqrt{\sum_{t=\lfloor T/r \rfloor r+1}^{T} \frac{1}{t}}\\
 &\qquad\overset{\text{(i)}}{\leq} \sum_{k=1}^{\lfloor T/r \rfloor }\sum_{i=1}^d |g_{kr,i}|\sqrt{\frac{1}{kr}} + \sum_{k=1}^{\lfloor T/r \rfloor }\sum_{i=1}^d\norm{g_{(k-1)r+1:kr-1,i}} \sqrt{\sum_{t=(k-1)r+1}^{kr-1} \frac{1}{t}}\\
 &\qquad\quad + \sum_{i=1}^d\norm{g_{\lfloor T/r \rfloor r+1:T,i}} \sqrt{\sum_{t=\lfloor T/r \rfloor r+1}^{T} \frac{1}{t}}\\
 &\qquad= \sum_{k=1}^{\lfloor T/r \rfloor }\sum_{i=1}^d\left( \norm{g_{(k-1)r+1:kr-1,i}} \sqrt{\sum_{t=(k-1)r+1}^{kr-1} \frac{1}{t}} + |g_{kr,i}|\sqrt{\frac{1}{kr}} \right)\\
 &\qquad\quad + \sum_{i=1}^d\norm{g_{\lfloor T/r \rfloor r+1:T,i}} \sqrt{\sum_{t=\lfloor T/r \rfloor r+1}^{T} \frac{1}{t}}\\
 &\qquad\overset{\text{(ii)}}{\leq} \sum_{k=1}^{\lfloor T/r \rfloor }\sum_{i=1}^d \norm{g_{(k-1)r+1:kr,i}} \sqrt{\sum_{t=(k-1)r+1}^{kr} \frac{1}{t}} + \sum_{i=1}^d\norm{g_{\lfloor T/r \rfloor r+1:T,i}} \sqrt{\sum_{t=\lfloor T/r \rfloor r+1}^{T} \frac{1}{t}}\\
 &\qquad= \sum_{i=1}^d\left( \sum_{k=1}^{\lfloor T/r \rfloor } \norm{g_{(k-1)r+1:kr,i}} \sqrt{\sum_{t=(k-1)r+1}^{kr} \frac{1}{t}} + \norm{g_{\lfloor T/r \rfloor r+1:T,i}} \sqrt{\sum_{t=\lfloor T/r \rfloor r+1}^{T} \frac{1}{t}} \right)\\
 &\qquad\overset{\text{(iii)}}{\leq} \sum_{i=1}^d \norm{g_{1:T,i}}\sqrt{\sum_{t=1}^{T} \frac{1}{t}},
\end{align*}
where (i) follows from $|g_{kr,i}|\sqrt{\frac{1}{kr}},\forall k\geq 1,\forall i\in [d]$, (ii) follows from~\Cref{eq:pf1} and (iii) holds due to Cauchy-Schwarz inequality. Then we have

\begin{align*}
    &2c(1-\beta_1)\sum_{t=1}^T\mathbb{E}\norm{\theta_t - \theta^\star}^2 \\
    &\qquad\leq \sum_{k=0}^{\lfloor T/r \rfloor }\left( \frac{G_\infty D_\infty^2}{\alpha} \sqrt{kr+2}  +4c(1-\beta_1) \mathbb{E}\norm{\theta_{kr} - \theta^\star}^2  \right) + \sum_{k=1}^{\lfloor T/r \rfloor }\frac{dG_\infty D_\infty^2}{\alpha}\sqrt{kr-1}\\
    &\qquad\quad +  \frac{dG_\infty D_\infty^2\sqrt{T}}{\alpha} + \frac{\alpha (1+\beta_1)}{(1-\beta_1)(1-\delta)\sqrt{1-\beta_2}}\sum_{k=1}^{\lfloor T/r \rfloor }\sum_{i=1}^d\mathbb{E}\norm{g_{(k-1)r:kr-1,i}} \sqrt{\sum_{t=(k-1)r}^{kr-1} \frac{1}{t}}\\
    &\qquad\quad +  \frac{\alpha (1+\beta_1)}{(1-\beta_1)(1-\delta)\sqrt{1-\beta_2}}\sum_{i=1}^d\mathbb{E}\norm{g_{\lfloor T/r \rfloor r:T,i}} \sqrt{\sum_{t=\lfloor T/r \rfloor r}^{T} \frac{1}{t}} + G_\infty D_\infty^2 \sum_{t=1}^{T} \frac{\beta_{1t}}{\alpha_t}\\
    &\qquad\leq \sum_{k=0}^{\lfloor T/r \rfloor }\left( \frac{G_\infty D_\infty^2}{\alpha} \sqrt{kr+2}  +4c(1-\beta_1) \mathbb{E}\norm{\theta_{kr} - \theta^\star}^2  \right) + \sum_{k=1}^{\lfloor T/r \rfloor }\frac{dG_\infty D_\infty^2}{\alpha}\sqrt{kr-1}\\
    &\qquad\quad +  \frac{dG_\infty D_\infty^2\sqrt{T}}{\alpha} + \frac{\alpha (1+\beta_1)}{(1-\beta_1)(1-\delta)\sqrt{1-\beta_2}}\sum_{i=1}^d\mathbb{E}\norm{g_{1:T,i}}\sqrt{\sum_{t=1}^{T} \frac{1}{t}} + G_\infty D_\infty^2 \sum_{t=1}^{T} \frac{\beta_{1t}}{\alpha_t}\\
    &\qquad\overset{\text{(i)}}{\leq} \sum_{k=0}^{\lfloor T/r \rfloor }\left( \frac{G_\infty D_\infty^2}{\alpha} \sqrt{kr+2}  +4c(1-\beta_1) \mathbb{E}\norm{\theta_{kr} - \theta^\star}^2  \right) + \sum_{k=1}^{\lfloor T/r \rfloor }\frac{dG_\infty D_\infty^2}{\alpha}\sqrt{kr-1}\\
    &\qquad\quad +  \frac{dG_\infty D_\infty^2\sqrt{T}}{\alpha} + \frac{\alpha (1+\beta_1)\sqrt{d(1+\log T)}}{(1-\beta_1)(1-\delta)\sqrt{1-\beta_2}}\sum_{i=1}^d\mathbb{E}\norm{g_{1:T,i}} + \frac{\beta_1G_\infty D_\infty^2}{\alpha(1-\lambda)^2},
\end{align*}
where (i) follows from Lemma \ref{lem:mVBound} and Lemma \ref{lem:seqSum}.

Finally, applying the Jensen's inequality and the above bound, we obtain
\begin{align*}
 \mathbb{E}\norm{\theta_{out} - \theta^\star}^2
 \leq& \frac{1}{T}\sum_{t=1}^T\mathbb{E}\norm{\theta_t - \theta^\star}^2\\
 \leq& \frac{1}{T}\sum_{k=0}^{\lfloor T/r \rfloor }\left( \frac{G_\infty D_\infty^2}{2c\alpha(1-\beta_1)} \sqrt{kr+2}  + 2 \mathbb{E}\norm{\theta_{kr} - \theta^\star}^2  \right) + \frac{1}{T}\sum_{k=1}^{\lfloor T/r \rfloor }\frac{dG_\infty D_\infty^2}{2c\alpha(1-\beta_1)}\sqrt{kr-1}\\
 &+  \frac{dG_\infty D_\infty^2\sqrt{T}}{2c\alpha(1-\beta_1)} + \frac{\alpha (1+\beta_1)\sqrt{d(1+\log T)}}{2c(1-\beta_1)^2(1-\delta)\sqrt{1-\beta_2}}\sum_{i=1}^d\mathbb{E}\norm{g_{1:T,i}} + \frac{\beta_1G_\infty D_\infty^2}{2c\alpha(1-\beta_1)(1-\lambda)^2},
\end{align*}
which concludes the proof.

\end{document}